\newcommand{\ga}{\alpha}
\newcommand{\gb}{\beta}
\newcommand{\gd}{\delta}
\newcommand{\gw}{\omega}
\newcommand{\gs}{\sigma}
\newcommand{\eps}{\varepsilon}
\newcommand{\coll}{\mathrm{Coll}}
\newcommand{\cantor}{2^\gw}
\newcommand{\supp}{\mathrm{supp}}
\newcommand{\dom}{\mathrm{dom}}
\newcommand{\sing}{\mathrm{Sing}}
\newcommand{\regg}{\mathrm{Reg}}
\newcommand{\colors}{\mathrm{Col}}
\newtheorem{theorem}{Theorem}[section]
\newtheorem{claim}[theorem]{Claim}
\newtheorem{corollary}[theorem]{Corollary}
\newtheorem{fact}[theorem]{Fact}
\newtheorem{proposition}[theorem]{Proposition}
\theoremstyle{definition}
\newtheorem{definition}[theorem]{Definition}
\newtheorem{example}[theorem]{Example}
\newtheorem{question}[theorem]{Question}
\title{Coloring the distance graphs\footnote{2020 AMS subject classification 03E35, 14P99, 05C15.} \footnote{Keywords: Solovay model, geometric set theory, Noetherian topology.}}
\author{
Jind{\v r}ich Zapletal\\
University of Florida\\
zapletal@ufl.edu\\
ORCID 0000-0003-3437-5073}
\begin{document}
\maketitle

\begin{abstract}
Let $n\geq 1$ be a number. Let $\Gamma_n$ be the graph on $\mathbb{R}^n$ connecting points of rational Euclidean distance. It is consistent with choiceless set theory ZF+DC that $\Gamma_n$ has countable chromatic number, yet the chromatic number of $\Gamma_{n+1}$ is uncountable.
\end{abstract}

\section{Introduction}

Let $n\geq 1$ be a number. Let $\Gamma_n$ be the graph on $\mathbb{R}^n$ connecting points of rational Euclidean distance. Komj{\' a}th \cite{komjath:decomposition} proved  that in ZFC, all graphs $\Gamma_n$ have countable chromatic number; the cases $n=2$ and $n=3$ are easier and have been known much earlier \cite{komjath:r3} \cite{hajnal:chromatic}. Difficulty of the proofs greatly increases with $n$.  The main theorem of this paper shows why this is so.

\begin{theorem}
\label{maintheorem}
Let $n\geq 1$ be a number. The statement ``the chromatic number of $\Gamma_n$ is countable while that of $\Gamma_{n+1}$ is not"  is consistent with ZF+DC relative to an inaccessible cardinal.
\end{theorem}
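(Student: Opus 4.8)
The plan is to obtain the model as a generic extension of a Solovay model by a $\sigma$-closed, \emph{balanced} ``coloring poset'' for $\Gamma_n$, and to make the asymmetry between $\Gamma_n$ and $\Gamma_{n+1}$ follow from a dimension count on the real algebraic varieties $V_q=\{(x,y)\in(\R^n)^2:\|x-y\|^2=q\}$, $q\in\Q$, whose countable union is $\Gamma_n$.

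Fix $n$. Let $\kappa$ be inaccessible and let $W$ be the symmetric Solovay model derived from $\mathrm{Coll}(\gw,<\kappa)$ over $V$, so $W\models\mathrm{ZF}+\mathrm{DC}$ and every set of reals in $W$ enjoys the usual regularity properties. In $W$, let $P=P_n$ be the poset whose conditions are countable partial functions $p\colon\R^n\to\gw$ that properly color $\Gamma_n\restriction\dom(p)$, augmented with countable side conditions (elementary submodels closed under the relevant semialgebraic operations) chosen so that (i) a density argument forces the generic union $c_{\mathit{gen}}=\bigcup H$ to be a \emph{total} proper coloring $\R^n\to\gw$, and (ii) $P_n$ is balanced in the sense of geometric set theory. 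Since $P_n$ is $\sigma$-closed it adds no reals, the passage to $W[H]$ preserves $\mathrm{ZF}+\mathrm{DC}$, and in $W[H]$ the coloring $c_{\mathit{gen}}$ witnesses $\chi(\Gamma_n)\le\aleph_0$.

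Balancedness is the point at which Komj\'ath's theorem \cite{komjath:decomposition} is consumed. Working in the ground model $V$, where AC holds, fix a total countable proper coloring $d$ of $\Gamma_n$ on $\R^n$. Reinterpreted across all further forcing extensions, $d$ is a virtual condition of $P_n$, and one checks that it is balanced --- any two conditions whose color values are consistent with those of $d$ amalgamate, because $d$ is a single fixed set --- and that such balanced conditions are cofinal in $P_n$. This step is routine once $P_n$ is set up correctly, but it is precisely here that the non-effective content of Komj\'ath's construction is absorbed, once and for all.

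The heart of the matter, and the step I expect to be the main obstacle, is showing $W[H]\models\chi(\Gamma_{n+1})>\aleph_0$, i.e.\ that $P_n$ adds no countable proper coloring of $\Gamma_{n+1}$. Assume toward a contradiction that a balanced condition forces $\dot c$ to be a proper coloring of $\Gamma_{n+1}$ with values in $\gw$. By balance, for a point $x\in\R^{n+1}$ random over the model carrying the balanced condition the value $\dot c(x)$ is decided outright, so $\dot c$ is computed from $x$, a real parameter, and the fixed coloring $d$ of $\Gamma_n$. Fix $q\in\Q$ with $q>0$ and choose $x_0,x_1\in\R^{n+1}$ with $\|x_0-x_1\|^2=q$ that are jointly generic for the product of two copies of the Lebesgue measure algebra on $\R^{n+1}$, subject only to this one algebraic constraint --- equivalently, take $x_0$ random and then $x_1$ random on the $n$-sphere $S=\{y:\|y-x_0\|^2=q\}$ over the model containing $x_0$ and $d$. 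Since $S$ is an irreducible $n$-dimensional variety while $d$ is $n$-dimensional data concentrated on the hyperplane $\R^n\subseteq\R^{n+1}$, the configuration can be arranged so that the countably much of $d$ relevant to $\dot c(x_0)$ and the part relevant to $\dot c(x_1)$ are mutually generic; a measure-preserving automorphism interchanging $x_0$ and $x_1$ over the model then forces $\dot c(x_0)=\dot c(x_1)$, contradicting properness at the $\Gamma_{n+1}$-edge $\{x_0,x_1\}$. Equivalently, one shows that every $\Gamma_{n+1}$-independent set of $W[H]$ is Lebesgue measurable, hence null by the Steinhaus property, so that countably many of them cannot cover $\R^{n+1}$. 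The delicate point is that this argument must be engineered so it provably \emph{fails} for $\Gamma_n$ in place of $\Gamma_{n+1}$: there the analogous pair lies on an $(n-1)$-sphere inside $\R^n$, and the generic coloring $c_{\mathit{gen}}$ --- which is part of the model and whose graph is ``one dimension thicker'' than that sphere --- already separates the two points, so no automorphism fixing $c_{\mathit{gen}}$ can interchange them. Thus the single extra dimension available on the rational-distance spheres in $\R^{n+1}$, controlled through the Noetherian (Zariski) geometry of the varieties $\|x-y\|^2=q$, is exactly the resource that keeps $\chi(\Gamma_{n+1})$ uncountable while leaving $\chi(\Gamma_n)$ countable.
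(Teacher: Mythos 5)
Your overall architecture (Solovay model plus a $\gs$-closed balanced coloring poset for $\Gamma_n$, with the asymmetry coming from the geometry of the rational-distance spheres) matches the paper's, but the two steps you yourself flag as the crux both have genuine gaps. First, the balanced virtual condition cannot simply be a Komj\'ath coloring $d$ fixed in the ground model. A balanced pair must interpret as something that conditions of the poset can lie below, and in the paper's poset a condition must satisfy a strong structural requirement (Definition~\ref{posetdefinition}(B): on every uncountable irreducible algebraic set over its support, every admissible color is attained infinitely often). An arbitrary AC-coloring has no reason to satisfy this, and without it the amalgamation fails: two conditions $p_0,p_1$ extending $d$ in different extensions can contain new $\Gamma$-connected points $x_0,x_1$ not in $\dom(d)$, and ``consistency with $d$'' says nothing about their colors. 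The paper instead builds the total coloring by an $\gw_1$-recursion \emph{through the poset} under CH (Corollary~\ref{balancecorollary}), so Komj\'ath's theorem is not used at all; the clash between $p_0(x_0)$ and $p_1(x_1)$ is then ruled out by locating $x_1$ inside the smallest algebraic set over $\supp(c)$ containing it, which lies inside the variety $\{y\colon\phi_m(x_0,y)=0\}$, and pulling the color back to a point of $\dom(c)$.

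Second, and more seriously, the step just described only works because the two extensions containing $x_0$ and $x_1$ compute these minimal algebraic hulls correctly, and for the pair of points at distance $\sqrt q$ on a sphere in $\R^{n+1}$ the extensions $V[x_0],V[x_1]$ are \emph{not} mutually generic, so ordinary balance ``in the sense of geometric set theory'' does not apply to them. The paper's key new notion is that of mutually $n$-Noetherian extensions: a generic pair at fixed distance in $\R^{n+1}$ is mutually $(n{+}1)$-Noetherian (Example~\ref{mainexample}, proved by realizing one point of the pair as the unique common solution over $n{+}1$ mutually generic points and applying a pigeonhole on Krull dimension, Proposition~\ref{neatproposition}), while the analogous pair in $\R^n$ is only mutually $n$-Noetherian --- and the coloring poset for $\Gamma_n$ is exactly $(n{+}1)$-Noetherian balanced. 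This calibration is what makes the argument succeed for $\Gamma_{n+1}$ and fail for $\Gamma_n$; your proposed mechanisms (a measure-preserving automorphism interchanging $x_0$ and $x_1$, or Steinhaus applied to measurable independent sets) do not supply the required amalgamation of the two conditions forcing $x_0,x_1$ into the purported color class, and your stated reason the argument fails for $\Gamma_n$ (``$c_{\mathit{gen}}$ already separates the two points'') is not the operative one. Also note the paper works with Baire category rather than measure, via Theorem~\ref{preservationtheorem}.
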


\noindent The cases of $n\leq 3$ have been resolved previously by somewhat ad hoc methods. The case $n=1$ is \cite[Corollary 12.3.16]{z:geometric}, $n=2$ is proved in \cite[Corollary 12.3.18]{z:geometric}, and the harder case $n=3$ is proved in \cite{z:distance3}. Unsurprisingly, Theorem~\ref{maintheorem} is a special case of a much stronger result. 

\begin{theorem}
\label{maintheorem2}
Let $n\geq 1$ be a number. Let $\Gamma$ be a $\gs$-algebraic hypergraph on $\mathbb{R}^n$ containing no perfect clique. From an inaccessible cardinal there is a model of ZF+DC in which the chromatic number of $\Gamma$ is countable, while in every non-meager subset of $\mathbb{R}^{n+1}$ it is possible to find points of every small enough distance.
\end{theorem}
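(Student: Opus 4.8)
The plan is to realize the model as a balanced extension of a Solovay model, in the style of geometric set theory. Fix an inaccessible cardinal $\kappa$, let $G_0 \subseteq \Coll(\omega,<\kappa)$ be generic over $V$, and let $W$ be the Solovay model derived from $G_0$: the class of sets hereditarily definable in $V[G_0]$ from a real and ordinals. Recall $W \models \mathrm{ZF}+\mathrm{DC}$ and in $W$ every subset of a Polish space has the Baire property. Over $V[G_0]$ I would force with the natural $\sigma$-closed poset $P = P_\Gamma$ adding a countable $\Gamma$-coloring: a condition is a countable partial $\Gamma$-coloring $p\colon a \to \omega$ with $a \subseteq \mathbb{R}^n$ (no $\Gamma$-edge with all vertices in $a$ is $p$-monochromatic), and $q \le p$ holds when $\dom(q) \supseteq \dom(p)$ and the partition of $\dom(p)$ into $q$-color classes refines the partition into $p$-color classes. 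If $H$ is generic, let $W[H]$ be the induced geometric extension, the $\mathrm{ZF}+\mathrm{DC}$ model generated over $W$ by $H$. I must then check three things: $W[H] \models \mathrm{ZF}+\mathrm{DC}$; $\chi(\Gamma) = \aleph_0$ in $W[H]$; and $W[H]$ satisfies the stated conclusion about $\mathbb{R}^{n+1}$.

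The first two points follow once $P$ is shown to be balanced, which is where the hypotheses on $\Gamma$ enter. A balanced virtual condition is, in essence, a \emph{total} $\Gamma$-coloring $c\colon(\mathbb{R}^n)^{V[G_0]} \to \omega$ living in a forcing extension of $V[G_0]$: two such colorings $c_0, c_1$ obtained in mutually generic extensions have the common lower bound $x \mapsto \langle c_0(x), c_1(x)\rangle$, again a total $\Gamma$-coloring (it refines both) after a fixed identification $\omega \times \omega \cong \omega$, and this amalgamation is precisely what balance demands. That below every condition such a virtual condition exists in a suitable extension is, in effect, a forcing form of Komj\'ath's $\mathrm{ZFC}$ theorem: one uses that each constituent of $\Gamma$ is an algebraic, hence closed, subset of some $(\mathbb{R}^n)^k$ and that a $\sigma$-algebraic hypergraph with no perfect clique admits a countable coloring after enough Cohen reals are added, the clique-analysis being run against the no-perfect-clique assumption. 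Granting balance, the general theory gives $W[H] \models \mathrm{ZF}+\mathrm{DC}$, and the generic object provides a total $\Gamma$-coloring into $\omega$ (the relation ``$x,y$ are never separated by a condition in $H$'' is a $\Gamma$-coloring, and by genericity it has only countably many classes), so $\chi(\Gamma) = \aleph_0$ in $W[H]$.

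The crux is the third point: $P_\Gamma$ must not add a non-meager $A \subseteq \mathbb{R}^{n+1}$ that omits all sufficiently small distances. In $W$ this already holds, for a soft reason: if $A$ is non-meager then, having the Baire property, it is comeager in some ball $U$, and for every $d$ below the radius of $U$ the pairs of points of $U$ at distance $d$ form a nonempty Baire space (a bundle over $U$ with fibres open subsets of an $n$-sphere) on which, by the Kuratowski--Ulam theorem, the pairs with a coordinate outside $A$ are meager; so $A$ realizes the distance $d$. The content of the theorem is that $P_\Gamma$, although it destroys the Baire property of some subsets of $\mathbb{R}^n$ (for $\Gamma=\Gamma_n$ a non-meager generic color class realizes no rational distance, so $W[H]$ does not satisfy that all sets have the Baire property), adds no such pathology one dimension higher. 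The strategy I would follow: given a condition forcing $\dot A \subseteq \mathbb{R}^{n+1}$ to be non-meager and to omit arbitrarily small distances, pass below a balanced virtual condition $c$, fix a small $d$, and build a pair $s_0, s_1 \in \mathbb{R}^{n+1}$ with $|s_0-s_1|=d$ which, regarded as a point of the distance-$d$ submanifold of $(\mathbb{R}^{n+1})^2$, is Cohen-generic over a countable elementary submodel containing $c$ and a name for $\dot A$; then each of $s_0, s_1$ is a Cohen-generic point of $\mathbb{R}^{n+1}$ over that submodel, so non-meagerness of $\dot A$ forces $s_0, s_1 \in \dot A$ and $d$ is realized, a contradiction. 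The main obstacle, and the genuinely new content, is to prove that the balanced condition $c$, being merely a coloring of the $n$-dimensional hypergraph $\Gamma$, imposes no constraint preventing such an $\mathbb{R}^{n+1}$-generic pair; this is the point at which the $\sigma$-algebraic structure of $\Gamma$ must be combined with the geometry of the distance spheres in $\mathbb{R}^{n+1}$.

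Finally, Theorem~\ref{maintheorem} is the instance $\Gamma = \Gamma_n$. This hypergraph is $\sigma$-algebraic, being the union over positive rationals $q$ of the algebraic hypersurfaces $\{(x,y)\in(\mathbb{R}^n)^2 : |x-y|^2 = q\}$, and it has no perfect clique: after translating so that one of its points is the origin, a set with all pairwise distances rational has all pairwise inner products rational, hence lies in the $\mathbb{Q}$-linear span of finitely many of its members and is therefore countable. So in $W[H]$ the chromatic number of $\Gamma_n$ is $\aleph_0$, while the $\mathbb{R}^{n+1}$-conclusion gives that $\chi(\Gamma_{n+1})$ is uncountable: a countable cover of $\mathbb{R}^{n+1}$ has, by the Baire category theorem available under $\mathrm{DC}$, a non-meager piece, which then contains two points at some rational distance.
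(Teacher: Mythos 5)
You correctly set up the overall shape of the argument---a Solovay model $W$ from an inaccessible, a $\sigma$-closed Suslin coloring poset, balance via amalgamation of total colorings, a preservation step built around a Cohen-generic distance-$\eps$ pair in $\mathbb{R}^{n+1}$---and you yourself flag exactly where the gap sits: your last paragraph concedes that ``the genuinely new content'' is proving that the balanced condition does not obstruct the $\mathbb{R}^{n+1}$-generic pair. That concession is accurate, and that content is precisely the body of the paper; it is not a loose end but the theorem.

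Concretely, two things are missing. First, your balance amalgamation is justified only for \emph{mutually generic} extensions, while the two points $s_0, s_1 \in \mathbb{R}^{n+1}$ at distance $\eps$ are \emph{not} mutually Cohen-generic: they satisfy a nontrivial algebraic relation, so $V[s_0]$ and $V[s_1]$ share more than the ground model, and the pairing $x \mapsto (c_0(x), c_1(x))$ has no reason to cohere or to refine both $p_0 \in V[s_0]$ and $p_1 \in V[s_1]$. The paper's solution is to weaken the hypothesis in the balance notion from mutual genericity to mutual $(n{+}1)$-\emph{Noetherianness} (agreement of smallest closed supersets coded in each model, for analytic Noetherian topologies of Krull dimension $\leq n$). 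It is a genuine geometric lemma, not a soft fact, that the distance-$\eps$ pair in $\mathbb{R}^{n+1}$ is mutually $(n{+}1)$-Noetherian; it rests on a combinatorial proposition which turns $n{+}1$ mutually generic duplicates into a strictly decreasing chain of $n{+}1$ irreducible closed sets, contradicting the Krull dimension bound. This dimension count is exactly what separates $\mathbb{R}^{n+1}$ from $\mathbb{R}^n$ and has no analogue in your sketch. Second, the coloring poset must be engineered so that its balance survives under this weaker hypothesis, and your proposed poset---countable partial colorings under partition refinement---almost certainly is not. The paper's poset carries real-closed-field supports, colors that are pairs of an open ball and a finite index set, a density requirement on uncountable irreducible algebraic sets, and an ordering $q \leq p$ that demands $p \subseteq q$ together with $p''A = q''A$ for every $A$ algebraic over $\supp(p)$. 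The balance proof uses the Noetherian hypothesis exactly at the point where you would reach for mutual genericity: for $x \in \dom(p_1)$ in a set $A$ algebraic over $\supp(p_0)$, one concludes $B(p_0,x)=B(c,x)\subseteq A$ and then uses the $p_1 \leq c$ clause to find a witness of the same color inside $\dom(p_0)\cap A$. The refinement ordering provides no such mechanism. Your peripheral observations---that $\Gamma_n$ is $\sigma$-algebraic and has no perfect clique (rational Gram matrix argument), that under the Baire property a non-meager set in $\mathbb{R}^{n+1}$ realizes all small distances by Kuratowski--Ulam, and that the $\mathbb{R}^{n+1}$ conclusion implies $\chi(\Gamma_{n+1})$ is uncountable---are all correct.
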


\noindent Here, a $\gs$-algebraic graph on $\mathbb{R}^n$ is one for which there are countably many polynomials $\phi_m(u, v)$ for $m\in\gw$, each with real coefficients and $2n$ many free variables such that any distinct points $x, y\in\mathbb{R}^n$ are connected if there is $m$ such that $\phi_m(x, y)=0$. For such graphs, in ZFC nonexistence of perfect clique is equivalent to countable chromatic number \cite{schmerl:avoidable}; the colorings are obtained through heavy use of the Axiom of Choice. Theorem~\ref{maintheorem2} is clearly the strongest possible result in a certain direction. It follows from a conjunction of the coloring poset construction in Theorem~\ref{posettheorem} and a preservation result in Theorem~\ref{preservationtheorem}.

The area of chromatic numbers of algebraic and $\gs$-algebraic graphs and hypergraphs in choiceless context offers rich and novel interplay between forcing, combinatorics, and real algebraic geometry. Among many attractive combinatorial problems which are left untouched by the results of this paper, I will state only the following attractive question concerning separation of chromatic numbers in the same dimension.

\begin{question}
For $n\geq 1$ and a countable set $a$ of positive real numbers, let $\Gamma_{na}$ be the graph on $\mathbb{R}^n$ connecting points whose distance belongs to $a$. Characterize the pairs $\langle a, b\rangle$ such that it is consistent with ZF+DC that the chromatic number of $\Gamma_{na}$ is countable while that of $\Gamma_{nb}$ is not.
\end{question}

\noindent The proof of Theorem~\ref{maintheorem2} uses the approach of geometric set theory \cite{z:geometric}. The model is constructed as a generic extension of the classical choiceless Solovay model \cite[Theorem 26.14]{jech:newset} by a rather canonical coloring poset (Definition~\ref{posetdefinition}).  The method uses an inaccessible cardinal to support the general framework. Removing the inaccessible cardinal in the spirit of \cite{Sh:176} requires plenty of improvisation, but in the given case is probably possible. Notation of the paper uses the set theoretic standard of \cite{jech:newset}, and in matters of geometric set theory, \cite{z:geometric}. DC is the Axiom of Dependent Choices. All theorems and definitions take place in ZFC set theory. 

\section{Mutually Noetherian extensions}

This section introduces the main technical notion connecting dimension of Euclidean spaces with generic extensions. To start, recall the following standard definitions of algebraic geometry.

\begin{definition}
Let $\langle X, \mathcal{T}\rangle$ be a topological space.

\begin{enumerate}
\item $\mathcal{T}$ is \emph{Noetherian} if there is no infinite sequence of $\mathcal{T}$-closed sets strictly decreasing with respect to inclusion.
\item  A $\mathcal{T}$-closed set is \emph{irreducible} if it is not the union of finitely many properly smaller $\mathcal{T}$-closed subsets.
\item The \emph{Krull dimension} of $\mathcal{T}$ is the maximum length of chains of irreducible closed sets linearly ordered by inclusion, minus one. if the maximum does not exist, the Krull dimension is infinite.
\end{enumerate}
\end{definition}

\noindent Note that this definition provides only for finite and not transfinite values of Krull dimension, which is fine for the purposes of this paper. Noetherian topologies are commonly identified by their closed sets. The most basic example is the topology of algebraic subsets of $\mathbb{R}^n$ for a number $n\geq 1$, which has Krull dimension $n$ by the Hilbert basis theorem. The following definitions connect Noetherian topologies with descriptive set theory and forcing.

\begin{definition}
Let $X$ be a $K_\gs$ Polish space. A topology $\mathcal{T}$ on $X$ is \emph{analytic} if every $\mathcal{T}$-closed set is closed in the Polish topology, and the collection of all $\mathcal{T}$-closed sets is an analytic subset of the standard Borel space $F(X)$ of closed subsets of $X$.
\end{definition}

\noindent For an analytic topology on a $K_\gs$ space $X$, standard complexity and Shoenfield absoluteness arguments show that the properties such as Noetherian status, irreducible status of closed sets, and the Krull dimension are all absolute between transitive models of ZFC containing all ordinals.

\begin{definition}
Let $X$ be a Polish space and $\mathcal{T}$ be a Noetherian topology on $X$. Let $M$ be a transitive model of set theory containing the code for $X$, and let $A\subset X$ be a set. The symbol $C(M, A)$ denotes the inclusion-smallest $\mathcal{T}$-closed set coded in $M$ which contains $A$ as a subset.
\end{definition}

\noindent Note that the set $C(M, A)$ is well-defined since the search for ever smaller $\mathcal{T}$-closed sets coded in $M$ containing $A$ as a subset cannot go on forever by the Noetherian property of $\mathcal{T}$. The dependence of the set $C(M, A)$ on the topology is suppressed as $\mathcal{T}$ is always clear from the context.

\begin{definition}
\label{nnoetheriandefinition}
Let $n\geq 1$ be a natural number. For generic extensions $V[G_0], V[G_1]$, say that $V[G_0]$ is $n$-\emph{Noetherian over} $V[G_1]$ if for every Polish $K_\gs$-space $X$ and every analytic Noetherian topology $\mathcal{T}$ of Krull dimension smaller than $n$ coded in the ground model, and every set $A\subset X$ in $V[G_1]$, $C(V, A)=C(V[G_0], A)$. If in addition$V[G_1]$ is $n$-\emph{Noetherian over} $V[G_0]$, we call these generic extensions \emph{mutually $n$-Noetherian}.
\end{definition}

\begin{proposition}
\label{enoughproposition}
In Definition~\ref{nnoetheriandefinition}, it is only necessary to consider irreducible $\mathcal{T}$-closed sets $A\subset X$.
\end{proposition}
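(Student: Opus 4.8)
The plan is to peel an arbitrary $A\subseteq X$ in $V[G_1]$ down to irreducible closed pieces in two steps, and to check that the operator $C(M,\cdot)$ is insensitive to each step. The whole argument is elementary; the only thing to watch is that the constructions stay inside $V[G_1]$, which is guaranteed by the absoluteness remarks following the definition of analytic topology.

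First I would reduce to closed sets. For any transitive model $M$ of set theory containing the code of $X$ and any $A\subseteq X$, a $\mathcal{T}$-closed set coded in $M$ contains $A$ if and only if it contains the $\mathcal{T}$-closure $\bar A=\mathrm{cl}_{\mathcal{T}}(A)$; hence $C(M,A)=C(M,\bar A)$. Moreover, if $A\in V[G_1]$ then $\bar A\in V[G_1]$, because $\bar A$ is the intersection of all $\mathcal{T}$-closed sets containing $A$ and, by the Noetherian property (absolute, hence valid in $V[G_1]$), this intersection is realized by a finite subintersection. Next I would reduce to irreducible pieces: working in $V[G_1]$, which is a model of ZFC in which $\mathcal{T}$ is still Noetherian, the standard minimal-counterexample argument shows that $\bar A$ is a finite union $\bar A=A_1\cup\dots\cup A_k$ of irreducible $\mathcal{T}$-closed sets, all belonging to $V[G_1]$; irreducibility of each $A_i$ is absolute, so the $A_i$ are legitimate instances of the restricted test.

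It remains to see that $C(M,\cdot)$ commutes with this finite union, i.e. $C(M,\bar A)=\bigcup_{i\leq k}C(M,A_i)$ for $M\in\{V,V[G_0]\}$. The right-hand side is a finite union of $\mathcal{T}$-closed sets coded in $M$, hence is $\mathcal{T}$-closed and coded in $M$, and it contains $\bigcup_i A_i=\bar A$, so minimality of $C(M,\bar A)$ gives the inclusion $\subseteq$. Conversely $C(M,\bar A)\supseteq\bar A\supseteq A_i$ and $C(M,\bar A)$ is $\mathcal{T}$-closed and coded in $M$, so minimality of $C(M,A_i)$ gives $C(M,\bar A)\supseteq C(M,A_i)$ for each $i\leq k$, whence $\supseteq$. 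Granting the equality $C(V,B)=C(V[G_0],B)$ for every irreducible $\mathcal{T}$-closed $B\subseteq X$ in $V[G_1]$, I apply it to each $A_i$ and chain the displayed identities to obtain
\[ C(V,A)=C(V,\bar A)=\bigcup_{i\leq k}C(V,A_i)=\bigcup_{i\leq k}C(V[G_0],A_i)=C(V[G_0],\bar A)=C(V[G_0],A), \]
which is the full clause of Definition~\ref{nnoetheriandefinition}; the mutual version follows by symmetry.

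There is no deep obstacle here. The one point requiring care — and the reason the absoluteness discussion is invoked — is that if $\mathcal{T}$ failed to be Noetherian in $V[G_1]$, or if irreducibility or closedness shifted between $V$, $V[G_0]$ and $V[G_1]$, then neither the existence of $\bar A$ nor the finite irreducible decomposition inside $V[G_1]$ would be available, and the restricted test would not obviously control the general case.
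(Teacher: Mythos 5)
Your overall strategy matches the paper's: replace $A$ by the smallest $\mathcal{T}$-closed set containing it, decompose that set into finitely many irreducible pieces lying in $V[G_1]$, apply the restricted hypothesis to each piece, and observe that $C(M,\cdot)$ commutes with finite unions. The union-commutation part is correct and written out more carefully than in the paper, and you even sidestep the paper's appeal to uniqueness of the irreducible decomposition, since any decomposition into irreducible $\mathcal{T}$-closed sets coded in $V[G_1]$ would serve.

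The gap is in the step you yourself flag as ``the only thing to watch,'' and the justification you give for it does not work. You need $\bar A$ to be the genuine minimal $\mathcal{T}$-closed superset of $A$ as computed in a model containing both $A$ and the codes from $V[G_0]$; otherwise the final equality $C(V[G_0],\bar A)=C(V[G_0],A)$ can fail, because a $\mathcal{T}$-closed set coded in $V[G_0]$ but not in $V[G_1]$ could contain $A$ without containing $\bar A$. Your argument that the intersection of all $\mathcal{T}$-closed supersets of $A$ is ``realized by a finite subintersection'' only shows that $V[G_1]$ has a smallest such set among those it codes, namely $C(V[G_1],A)$; it does not show that the larger model $V[G_0,G_1]$ sees no smaller one. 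Nor is this covered by the absoluteness remarks after the definition of analytic topology, which concern the Noetherian property, irreducibility and Krull dimension, not the minimal closed superset of an arbitrary --- possibly undefinable --- set $A$. The paper closes exactly this hole: it first replaces $A$ by its \emph{Polish} closure, a coded closed set belonging to $V[G_1]$ which has the same $\mathcal{T}$-closed supersets as $A$ because every $\mathcal{T}$-closed set is Polish-closed, and then applies Shoenfield absoluteness to the statement that a given coded $\mathcal{T}$-closed set is minimal above that Polish closure. With that step inserted, your proof is complete.
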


\begin{proof}
 Given an arbitrary set $A\subset X$ in the model $V[G_1]$, working in $V[G_0, G_1]$ find the smallest $\mathcal{T}$-closed set $C\subset X$ containing $A$ as a subset, and write it as $C=\bigcup_{i\in j}C_i$ as a finite union of irreducible $\mathcal{T}$-closed sets with smallest possible $j$. 

Observe is that the sets $C$ as well as $C_j$ for $i\in j$ are coded in the model $V[G_1]$. To see this, writing $\bar A$ for the Polish closure of $A$, $C$ is the smallest $\mathcal{T}$-closed set containing $\bar A$ as a subset, and $V[G_1]$ evaluates this set correctly by a Shoenfield absoluteness argument. The model $V[G_1]$ finds a decomposition of $C$ into irreducible $\mathcal{T}$-closed sets. By the uniqueness of this decomposition applied in $V[G_0, G_1]$, it is clear that $C=\bigcup_iC_i$ is exactly the decomposition that the model $V[G_1]$ finds.

Finally, suppose that  $C(V, C_i)=C(V[G_0], C_i)$ holds for each $i\in j$, with the common value denoted by $D_i$. Then $\bigcup_iD_i$ is equal to both $C(V, C)$ and $C(V[G_0], C)$, and by the choice of the set $C$ it is equal to both $C(V, A)$ and $C(V[G_0], A)$. Thus, the latter two sets are equal as desired.
\end{proof}

\noindent It is important to see how the mutual Noetherian property of generic extensions plays with mutual genericity. This is the contents of the following proposition.

\begin{proposition}
\label{productproposition}
Let $n\geq 1$ be a number. Let $V[G_0], V[G_1]$ be generic extensions and $V[G_1]$ is $n$-Noetherian over $V[G_0]$. Suppose that $P_0\in V[G_0]$ and $P_1\in V[G_1]$ be posets and $H_0\subset P_0$ and $H_1\subset P_1$ be filters mutually generic over $V[G_0, G_1]$. Then $V[G_1][H_1]$ is $n$ Noetherian over $V[G_0][H_0]$.
\end{proposition}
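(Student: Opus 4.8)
The plan is to reduce the statement to the $n$-Noetherian property of $V[G_1]$ over $V[G_0]$ that we are given, by showing that passing to the further mutually generic extensions $V[G_0][H_0]$ and $V[G_1][H_1]$ does not enlarge the sets $C(\cdot, A)$. Fix a Polish $K_\gs$-space $X$ and an analytic Noetherian topology $\mathcal{T}$ of Krull dimension $<n$, both coded in $V$, and fix a set $A\subset X$ lying in $V[G_1][H_1]$; by Proposition~\ref{enoughproposition} I may assume $A$ is an irreducible $\mathcal{T}$-closed set. I must show $C(V[G_0][H_0], A)=C(V[G_1][H_1], A)$. The inclusion $C(V[G_0][H_0],A)\supseteq C(V[G_1][H_1],A)$ is automatic, since any $\mathcal{T}$-closed set coded in $V[G_0][H_0]$ containing $A$ is in particular coded in $V[G_0][H_0][G_1][H_1]=V[G_1][H_1][G_0][H_0]$ and contains $A$, so it contains the smallest such set computed there, which is $C(V[G_1][H_1],A)$; and that set equals $C(V[G_1][H_1],A)$ because $A$ already lies in $V[G_1][H_1]$ and $\mathcal{T}$-closed sets of a $K_\gs$ analytic topology are evaluated absolutely (Shoenfield). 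So the whole content is the reverse inclusion, i.e.\ that $C(V[G_1][H_1],A)$ is in fact coded in $V[G_0][H_0]$.

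For this I would first replace $A$ by something in $V[G_1]$. The key observation is that $C(V[G_1],A)$ — the smallest $\mathcal{T}$-closed set coded in $V[G_1]$ containing $A$ — is irreducible: if it decomposed as a union of two properly smaller closed sets, then $A$, being irreducible, would be contained in one of them, contradicting minimality. Call this irreducible closed set $B\in V[G_1]$; note $A\subseteq B$ and hence $C(V[G_1][H_1],A)\subseteq C(V[G_1][H_1],B)$, while on the other hand $B$ itself witnesses $C(V[G_1][H_1],B)\subseteq B = C(V[G_1],B)$, so in fact $C(V[G_1][H_1],B)=B$ and therefore it suffices to prove $C(V[G_1],A)=B$ is coded in $V[G_0][H_0]$, equivalently that $C(V[G_0][H_0],B)=B$. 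In other words, the problem is now: given an irreducible $\mathcal{T}$-closed $B\in V[G_1]$, show $V[G_0][H_0]$ contains no strictly smaller $\mathcal{T}$-closed set coded there that still contains $B$.

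Suppose toward a contradiction that some $\mathcal{T}$-closed $D\subsetneq B$ with $B\subseteq D$ (impossible as literally stated — rather $D$ a $\mathcal{T}$-closed set coded in $V[G_0][H_0]$ with $B\subseteq D\subsetneq B$ cannot happen, so instead) there is a $\mathcal{T}$-closed $D$ coded in $V[G_0][H_0]$ with $C(V[G_0][H_0],B)=D\subsetneq B$. Since $H_0$ is $P_0$-generic over $V[G_0]$ and $P_0\in V[G_0]$, the set $D$ has a $P_0$-name in $V[G_0]$; by a mutual-genericity argument the relevant facts about $D$ reflect into $V[G_0]$ and then, using that $V[G_1]$ is $n$-Noetherian over $V[G_0]$ — so $C(V,B)=C(V[G_0],B)$, and symmetrically the $\mathcal{T}$-closed sets coded in $V[G_0]$ containing $B$ are exactly those coded in $V$ containing $B$ — one derives a $\mathcal{T}$-closed set coded in $V$ strictly between, contradicting irreducibility of $B=C(V[G_1],B)=C(V,B)$ via the absoluteness of irreducibility. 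The main obstacle, and where I would spend the most care, is exactly this last step: teasing apart the two generic extensions. One cannot simply intersect over conditions, because $\mathcal{T}$-closed sets need not be closed under countable intersections in a way that stays $\mathcal{T}$-closed; instead I would fix a name $\dot D\in V[G_0]$ for $D$ and a condition $p\in H_0$ forcing $B\subseteq \dot D\subsetneq B$, work inside $V[G_0][G_1]$ (where $H_1$, hence $B$, is available) and use that $H_0$ is generic over $V[G_0][G_1]$ to argue that $\bigcap\{(\dot D/q): q\le p\}$, or rather the $\mathcal{T}$-closure of the union of the complements, produces a $\mathcal{T}$-closed set coded in $V[G_0]$ — this uses Noetherianness, which guarantees the descending search stabilizes — containing $B$ and still a proper subset of $B$, and then apply $n$-Noetherianness of $V[G_1]$ over $V[G_0]$ together with Shoenfield absoluteness of irreducibility to reach the contradiction. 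Finally, running the symmetric argument (with the roles of the indices $0$ and $1$ interchanged, using the hypothesis in the "mutually" case, or just the stated direction for the stated conclusion) completes the proof.
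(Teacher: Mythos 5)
There is a genuine gap, and it starts with the statement you set out to prove. Definition~\ref{nnoetheriandefinition} always compares against the ground model $V$: what must be shown is that $C(V,A)$ equals the value of $C(\cdot,A)$ computed in the large extension on the \emph{other} side from the one containing $A$ (the paper's proof establishes $C(V,A)=C(V[G_0][H_0],A)$ for $A\in V[G_1][H_1]$, matching the proposition up to relabelling the indices). You instead try to prove $C(V[G_0][H_0],A)=C(V[G_1][H_1],A)$ for $A\in V[G_1][H_1]$. Since $A$ lies in $V[G_1][H_1]$, the right-hand side is the genuine $\mathcal{T}$-closure of $A$, so you are demanding that this closure be coded in $V[G_0][H_0]$ --- which is false: for the Zariski topology on $\mathbb{R}$ and $A=\{x\}$ with $x$ a Cohen real of $V[G_1]$, one has $C(V[G_1][H_1],A)=\{x\}$ while $C(V[G_0][H_0],A)=C(V,A)=\mathbb{R}$. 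The same confusion propagates through the argument: your reduction aims to show that $B=C(V[G_1],A)$ is itself coded in $V[G_0][H_0]$, and your contradiction hypothesis oscillates between ``$B\subseteq D\subsetneq B$'' and ``a set containing $B$ and still a proper subset of $B$,'' both of which are vacuous. The correct negation is that some $\mathcal{T}$-closed $D$ coded in $V[G_0][H_0]$ satisfies $B\subseteq D\subsetneq C(V,B)$, and the contradiction to be reached is with the minimality of $C(V,B)=C(V[G_0],B)$, not with the irreducibility of $B$.

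Even after repairing the target, the decisive step --- showing that the filter $H_0$, mutually generic with $H_1$, cannot supply a new, smaller $\mathcal{T}$-closed superset --- is exactly the part you leave as an admittedly vague sketch, and the construction you gesture at does not do the job. The paper runs the whole proposition as one forcing argument over $P_0\times P_1$ in $V[G_0,G_1]$: (a) it replaces the $P_1$-name $\tau_1$ for the set by its outer closure $A=X\setminus\bigcup\{O: p_1\Vdash O\cap\tau_1=\emptyset\}$, a closed set lying in $V[G_1]$ and forced to contain $\tau_1$, and applies the hypothesis to get $C=C(V,A)=C(V[G_0],A)$; (b) if some $p_0'$ forced the computed closure $\tau_0$ to be properly below $C$, it realizes $\tau_0$ as an actual $\mathcal{T}$-closed set $D\in V[G_0]$ by evaluating the name along a filter generic over a countable elementary submodel of $V[G_0]$, so that minimality of $C$ yields $A\not\subseteq D$; and (c) it separates $D$ from a point of $A$ by a basic open $O$ and obtains conditions forcing $\tau_0\cap O=\emptyset$ and $\tau_1\cap O\neq\emptyset$ simultaneously, contradicting $\tau_1\subseteq\tau_0$. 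Your sketch has neither the outer-closure construction on one side nor the elementary-submodel realization on the other; the object you propose (``the $\mathcal{T}$-closure of the union of the complements'' of the values of $\dot D$) is not a closed set containing $B$, evaluating a name at a single condition $q$ is not defined, and an intersection of realizations over several generic filters lands in a larger model rather than in $V[G_0]$. So the step you yourself flag as ``where I would spend the most care'' is precisely where the proof is missing.
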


\begin{proof}
Work in the model $V[G_0, G_1]$ and consider the poset $P_0\times P_1$. Let $X$ be a $K_\gs$ Polish space and $\mathcal{T}$ an analytic Noetherian topology on it of Krull dimension smaller than $n$, both in $V$. Let $p_0\in P_0$ and $p_1\in P_1$ be conditions and let $\tau_0, \tau_1$ be respective $P_0, P_1$-names in the models $V[G_0], V[G_1]$ such that $p_0\Vdash\tau_0$ is a $\mathcal{T}$-closed subset of $X$, $\tau_1\subset X$ is a set, and $\langle p_0, p_1\rangle\Vdash\tau_0= C(V[G_0][H_0], \tau_1)$; I must produce a ground model coded closed set $C\subset X$ such that $p_0\Vdash\tau_0=C$. 

Working in $V[G_1]$, form the closed set $A\subset X$ as $A=X\setminus \bigcup\{O\colon O\subset X$ is open and $p_1\Vdash O\cap\tau_1=0$. By the initial assumptions on the models $V[G_0]$ and $V[G_1]$, $C(V[G_0], A)=C(V, A)$ holds; write $C$ for the common value. Observe that $p_1\Vdash\tau_1\subset C$.  It will be enough to show that $p_0\Vdash \tau_0=C$.

Since $p_1\Vdash\tau_1\subset C$, the only way how the equality can fail is that there is a condition $p'_0\leq p_0$ forcing $\tau_0$ to be a proper subset of $C$. Working in $V[G_0]$, let $M_0$ be a countable elementary submodel of some large structure containing $\tau_0, C$, and $p'_0$. Let $h_0\subset P_0\cap M_0$ be a filter generic over the model $M_0$ and let $D=\tau_0/h_0$. This is a $\mathcal{T}$-closed set properly smaller than $C$, so $A\subseteq D$ fails. Thus, there must be a basic open set $O\subset X$ disjoint from $D$ which contains some element of the set $A$. By the definitions, this means that there is a condition $p''_0\leq p'_0$ in the filter $h_0$ which forces $\tau_0\cap O=0$, and a condition $p'_1\leq p_1$ which forces $\tau_1\cap O\neq 0$. This contradicts the initial assumptions on the conditions $p_0, p_1$.
\end{proof}

\begin{corollary}
Mutually generic extensions are mutually Noetherian.
\end{corollary}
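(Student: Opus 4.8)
The plan is to read this off Proposition~\ref{productproposition} by taking the ground model $V$ itself in the role of \emph{both} intermediate extensions $V[G_0]$ and $V[G_1]$. Concretely, let $P_0, P_1$ be posets in $V$ and let $H_0\subset P_0$, $H_1\subset P_1$ be filters mutually generic over $V$, so that $V[H_0]$ and $V[H_1]$ are mutually generic extensions; here ``mutually Noetherian'' is understood as ``mutually $n$-Noetherian for every $n\geq 1$'', so fix an arbitrary $n\geq 1$. I would invoke Proposition~\ref{productproposition} with $G_0, G_1$ chosen to be generic filters on the trivial (one-point) poset, so that $V[G_0]=V[G_1]=V$ and $V[G_0,G_1]=V$.

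The single hypothesis to verify is that $V[G_1]$ — that is, $V$ — is $n$-Noetherian over $V[G_0]$ — again $V$. This is immediate from Definition~\ref{nnoetheriandefinition}: for every $K_\gs$ Polish space $X$, every analytic Noetherian topology $\mathcal{T}$ of Krull dimension smaller than $n$ coded in $V$, and every set $A\subset X$ lying in $V$, one trivially has $C(V,A)=C(V,A)$. The remaining data of the proposition are present: $P_0\in V=V[G_0]$, $P_1\in V=V[G_1]$, and $H_0, H_1$ are filters mutually generic over $V=V[G_0,G_1]$. Thus the conclusion of Proposition~\ref{productproposition} applies and gives that $V[H_1]=V[G_1][H_1]$ is $n$-Noetherian over $V[H_0]=V[G_0][H_0]$.

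The situation is symmetric under interchanging the subscripts $0$ and $1$ (the poset $P_0\times P_1$ and the pair of mutually generic filters are unaffected), so the same argument yields that $V[H_0]$ is $n$-Noetherian over $V[H_1]$ as well. Since $n\geq 1$ was arbitrary, $V[H_0]$ and $V[H_1]$ are mutually $n$-Noetherian for every $n$, which is precisely the claim. There is essentially no obstacle here: all the work is done inside Proposition~\ref{productproposition}, and the only points requiring a moment's care are that the trivial-poset extension vacuously satisfies the Noetherian hypothesis and that mutual genericity of $H_0, H_1$ over $V$ coincides with mutual genericity over $V[G_0,G_1]$ once that model is just $V$.
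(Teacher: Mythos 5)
Your proposal is correct and is essentially the argument the paper intends: the corollary is stated without proof as an immediate instance of Proposition~\ref{productproposition}, obtained exactly as you do by taking the intermediate extensions to be trivial (so the Noetherian hypothesis holds vacuously) and applying the proposition in both directions by symmetry.
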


\noindent The key technical point behind the present paper is that there are rather simple procedures for producing mutually Noetherian extensions of precisely calibrated dimension. The known examples are produced via a certain \emph{duplication} technique and the following proposition.

\begin{proposition}
\label{neatproposition}
Let $n\geq 2$ be a number and $\mathcal{T}$ be an analytic Noetherian topology on a $K_\gs$-Polish space $X$ of Krull dimension less than $n$. Let $\langle V[G_i]\colon i\in n\rangle$ be a tuple of forcing extensions such that for any sets $b_0, b_1\subset n$, $V[G_i\colon i\in b_0]\cap V[G_i\colon i\in b_1]=V[G_i\colon i\in b_0\cap b_1]$. For every irreducible $\mathcal{T}$-closed set $A\subset X$ there is $i\in n$ such that $C(V, A)=C(V[G_i], A)$.
\end{proposition}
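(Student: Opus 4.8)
The plan is to induct on $n \geq 2$, using the hypothesis that the extensions $V[G_i]$ are "independent" in the sense that intersections of any subfamily collapse to the intersection of the index sets. Fix an irreducible $\mathcal{T}$-closed set $A \subset X$ lying in the ambient model $V[G_i \colon i \in n]$ — actually, since irreducibility and closedness are absolute (by the remarks after the definition of analytic topologies, via Shoenfield), $A$ need not a priori live in any proper sub-extension, but I will want to locate it. First I would form, for each $i \in n$, the closed set $C_i = C(V[G_j \colon j \neq i], A)$, the smallest $\mathcal{T}$-closed set coded in the extension omitting the $i$-th coordinate that contains $A$. Each $C_i \supseteq A$ and $C_i$ is coded in $V[G_j \colon j \neq i]$.

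The key claim is that $\bigcap_{i \in n} C_i = C(V, A)$, or at least that one of the $C_i$ already equals $C(V,A)$. Here is where the independence hypothesis and irreducibility do the work. Consider $D = \bigcap_{i} C_i$. By the intersection hypothesis applied across the $n$ sub-extensions $V[G_j \colon j \neq i]$ — whose pairwise intersections are the $(n-2)$-fold sub-extensions, and whose total intersection is $V$ — the set $D$, being simultaneously definable from data in each $V[G_j \colon j \neq i]$, should be coded in $V$ itself; this is the crux and I'd prove it by noting $D$ is the smallest $\mathcal{T}$-closed set containing $A$ that is coded in every $V[G_j\colon j\neq i]$, hence (by absoluteness of the Noetherian search and the intersection property) coded in $\bigcap_i V[G_j\colon j\neq i]=V$. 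Thus $D \supseteq C(V,A) \supseteq A$, and since each $C_i \supseteq C(V,A)$ trivially (as $C(V,A)$ is coded in $V \subseteq V[G_j\colon j\neq i]$ and contains $A$, while $C_i$ is the smallest such in the larger model), we get $D = C(V,A)$.

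Now I invoke irreducibility of $A$ together with the Krull dimension bound $< n$. The sets $C(V,A) \subseteq C_i \subseteq C(V[G_i'],A)$... more precisely, I would argue: if $C(V,A) = D = \bigcap_i C_i$ and $A$ is irreducible, then $A$ is contained in one of the irreducible components of $D$; but more usefully, consider the chain obtained by intersecting fewer and fewer of the $C_i$. For a subset $b \subseteq n$ put $C_b = \bigcap_{i \in b} C_i$; then $C_\emptyset = X \supseteq C_{\{0\}} \supseteq C_{\{0,1\}} \supseteq \cdots \supseteq C_n = C(V,A)$, and each $C_b$ is coded in $V[G_i \colon i \notin b]$ by the intersection property. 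If all these inclusions going from $b$ to $b \cup \{i\}$ were strict and each step dropped the Krull dimension of the relevant component containing $A$, we would exceed the dimension bound $n-1$; so some step is not strict, meaning $C_{b} = C_{b\cup\{i\}}$ for some $i \notin b$, i.e., $C(V[G_j\colon j\notin b\cup\{i\}],A)$ is already as small as adding the constraint from omitting $i$. Chasing this down to a single coordinate gives that $C(V[G_i],A) = C(V,A)$ for that $i$.

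The main obstacle I anticipate is making the dimension-counting argument in the last paragraph precise: a priori $C_b \supsetneq C_{b \cup \{i\}}$ need not drop Krull dimension, since a closed set can properly contain another of the same dimension (different components). The fix is to track not the dimension of $C_b$ but the dimension of $C(V, A)$'s position — work instead with the irreducible set $A$ and note that since $A$ is irreducible and $A \subseteq C_b$ for all $b$, $A$ lies inside a single irreducible component $E_b$ of $C_b$ of dimension $\geq \dim A$; one shows $E_{b\cup\{i\}} \subseteq E_b$ and that if the inclusion $C(V[G_j\colon j\notin b],A)\subsetneq$ "next stage" is strict at the level of the component containing $A$, the dimension of $E_b$ strictly drops. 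Since there are at most $n$ nontrivial drops possible from $\dim X$ down and the chain has length $n$, two consecutive components coincide, and elementarity/genericity (in the style of the proof of Proposition~\ref{productproposition}, using a countable elementary submodel and a generic filter to witness minimality) lets me transfer "the component containing $A$ doesn't shrink" to "the whole closed set $C(V[G_i],A)$ equals $C(V,A)$." I would model this transfer step closely on the argument already given for Proposition~\ref{productproposition}.
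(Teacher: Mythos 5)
Your approach diverges from the paper's and runs into two genuine gaps.

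First, your coding claim is unjustified. You define $C_i = C(V[G_j\colon j\neq i],A)$, set $C_b = \bigcap_{i\in b}C_i$, and assert that $C_b$ ``is coded in $V[G_i\colon i\notin b]$ by the intersection property.'' This does not follow. The intersection hypothesis says that if a single object lies in two of the models $M_{b_0}$ and $M_{b_1}$, then it lies in $M_{b_0\cap b_1}$; it says nothing about a set newly constructed as the intersection of two closed sets living in different models. For $b=\{0,1\}$ the set $C_0\cap C_1$ is built from $C_0\in V[G_j\colon j\neq 0]$ and $C_1\in V[G_j\colon j\neq 1]$, and there is no reason it should belong to either of those models, let alone to $V[G_j\colon j\geq 2]$. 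The paper sidesteps this entirely: it defines $C_b$ directly as the smallest $\mathcal{T}$-closed set \emph{coded in} $M_b = V[G_i\colon i\in b]$ that contains $A$, so coding is true by construction, and the intersection hypothesis is used only for the elementary statement that any closed set coded in $M_a$ has an inclusion-smallest $b\subseteq a$ with the set coded in $M_b$.

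Second, you correctly identify that your dimension count is incomplete --- a strict inclusion $C_{b\cup\{i\}}\subsetneq C_b$ need not drop Krull dimension --- but your proposed fix (tracking the component containing $A$ and falling back on ``elementarity/genericity in the style of Proposition~\ref{productproposition}'') is left as a sketch and does not close the gap. The ingredient you are missing is the paper's second claim: if $A$ is irreducible, then the minimal $\mathcal{T}$-closed set coded in $M_b$ containing $A$ is itself irreducible (if it decomposed, irreducibility of $A$ would force $A$ into a proper piece, contradicting minimality). Once all $C_b$ are irreducible, a chain of $n+1$ of them that is strictly decreasing under inclusion directly contradicts Krull dimension $<n$ with no component-tracking needed. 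The paper produces such a chain by a clean downward recursion: assuming for contradiction that $C_\emptyset\neq C_{\{i\}}$ for every $i$, one shows that for each nonempty $a$ there is $b\subset a$ with $|b|=|a|-1$ such that $C_a$ is not coded in $M_b$ (hence $C_a\subsetneq C_b$), and iterating from $a=n$ down to $\emptyset$ yields the forbidden chain. You should restructure along these lines: index $C_b$ by which generics are \emph{included}, observe irreducibility of the $C_b$, and run the downward recursion rather than intersecting the complementary sets.
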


\noindent The intersection condition on the tuples of generic extensions is satisfied for example for a mutually generic tuple by the product forcing theorem. The set $A$ does not have to belong to any of the models mentioned.

\begin{proof}
Suppose that the models $V[G_i]$ for $i\in n$ are given. For each set $b\subseteq n$ write $M_b=V[G_i\colon i\in b]$.

\begin{claim}
For every set $a\subset n$ and every closed set $C\subset X$ coded in $M_a$ there is an inclusion-smallest set $b\subset a$ such that $C$ is coded in $M_b$.
\end{claim}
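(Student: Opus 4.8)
The plan is to fix the closed set $C\subseteq X$ and study the family $\cB_C=\{b\subseteq a\colon C$ is coded in $M_b\}$. This family is nonempty, since $a\in\cB_C$ by hypothesis, and $a$ is finite because $a\subseteq n\in\gw$. It therefore suffices to show that $\cB_C$ is closed under binary intersections: then $b=\bigcap\cB_C$ still belongs to $\cB_C$ (a finite intersection of members of a family closed under pairwise intersections is again a member), and it is plainly the inclusion-smallest element. The one thing to arrange is that the intersection hypothesis on the models, $M_{b_0}\cap M_{b_1}=M_{b_0\cap b_1}$, can be brought to bear on a single code for $C$ that is manufactured the same way no matter which model one works in.

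To that end, fix the enumeration $\langle O_k\colon k\in\gw\rangle$ of a countable basis of $X$ that comes with the code for $X$; being in $V$, it is available in every model $M_b$. To the closed set $C$ associate its canonical code $D_C=\{k\in\gw\colon O_k\cap C\neq 0\}\subseteq\gw$. Two routine remarks. First, $D_C$ really is a code for $C$: by regularity of $X$ one has $C=X\setminus\bigcup\{O_k\colon k\notin D_C\}$, and this closed set is obtained from the real $D_C$ by a fixed recursive operation, so any model containing $D_C$ codes $C$. Second, from any code for $C$ lying in a transitive model $M$ of ZFC that contains the code for $X$, the model $M$ recovers $D_C$ correctly, since for each $k$ the predicate ``$O_k$ meets $C$'' is $\bgS^1_1$ in the code --- in fact Borel, if codes are taken in the Effros Borel space of $X$ --- and hence absolute between $M$ and the common outer model $M_n$ by Shoenfield absoluteness. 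Combining the two remarks, $C$ is coded in such a model $M$ if and only if $D_C\in M$.

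Now the claim closes immediately: if $b_0,b_1\in\cB_C$, then $D_C\in M_{b_0}$ and $D_C\in M_{b_1}$ by the equivalence just noted, hence $D_C\in M_{b_0}\cap M_{b_1}=M_{b_0\cap b_1}$ by the intersection hypothesis, and so $b_0\cap b_1\in\cB_C$; since $a$ is finite, $b=\bigcap\cB_C$ is then the inclusion-smallest subset of $a$ over which $C$ is coded. The step I expect to be the main obstacle is precisely the second remark, the absoluteness that pushes $D_C$ down into $M_{b_0\cap b_1}$: one must confirm that the coding convention for closed sets in force has the feature that ``the $k$-th basic open set meets the coded set'' is an absolute (projective, and in the natural conventions Borel) relation of the code, so that the $D_C$ extracted in $M_{b_0}$ is literally the same real as the one extracted in $M_{b_1}$. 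Granting this, nothing else in the argument uses the topology $\mathcal{T}$ or the number $n$ beyond $X$ being a regular Polish space and $a$ being finite.
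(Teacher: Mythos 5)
Your argument is correct and is essentially the paper's own: both proofs reduce the claim to the observation that the family of sets $b\subseteq a$ over which $C$ is coded is closed under intersection (via $M_{b_0}\cap M_{b_1}=M_{b_0\cap b_1}$), whence a finite nonempty family of subsets of the finite set $a$ has a minimum. The only difference is that you spell out the step the paper leaves implicit --- that a canonical code $D_C=\{k\colon O_k\cap C\neq 0\}$ is recovered identically, by $\bgS^1_1$-absoluteness, in any model containing some code for $C$, so that it genuinely lands in the intersection of the two models --- which is a worthwhile elaboration but not a different route.
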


\begin{proof}
Let $b_0, b_1\subset a$ be inclusion-minimal sets such that $C$ is coded in $M_{b_0}$ and $M_{b_1}$; I must show that $b_0=b_1$. Suppose towards a contradiction that the equality fails. The assumptions on the generic extensions then show that $C$ is coded in $M_{b_0\cap b_1}$, contradicting the minimal choice of both $b_0$ and $b_1$.
\end{proof}

\noindent Now, let $A\subset X$ be a nonempty irreducible $\mathcal{T}$-closed set. Let $C_b\subset X$ be the smallest $\mathcal{T}$-closed set coded in $M_b$ such that $A\subset C_b$. 

\begin{claim}
The sets $C_b$ are irreducible. $c\subseteq b$ implies $C_b\subseteq C_c$.
\end{claim}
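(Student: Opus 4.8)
The plan is to prove the two assertions separately, beginning with the monotonicity statement, which is immediate, and then deriving irreducibility of $C_b$ from the irreducibility of $A$ by way of the uniqueness of the irreducible decomposition in a Noetherian space.

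For the inclusion, suppose $c\subseteq b$. Then $M_c\subseteq M_b$, so the set $C_c$ --- which is $\mathcal{T}$-closed, coded in $M_c$ and hence in $M_b$, and contains $A$ --- is a competitor in the minimization defining $C_b$. By the minimal choice of $C_b$, this gives $C_b\subseteq C_c$.

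For irreducibility, I would fix $b\subseteq n$ and pass to the model $M_n=V[G_i\colon i\in n]$, inside which $\mathcal{T}$ is still Noetherian. There the $\mathcal{T}$-closed set $C_b$ decomposes uniquely as a finite union $C_b=\bigcup_{i<k}E_i$ of its irreducible $\mathcal{T}$-closed components, none of which is contained in another. The model $M_b$ also codes $C_b$ and, $\mathcal{T}$ being Noetherian there, forms some decomposition of $C_b$ into irreducible $\mathcal{T}$-closed pieces; since ``irreducible $\mathcal{T}$-closed set'' is absolute between transitive models containing all the ordinals, this decomposition is an irreducible decomposition in the sense of $M_n$ as well, and uniqueness of such decompositions forces it to coincide with $\{E_i\colon i<k\}$. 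In particular every $E_i$ is coded in $M_b$. Now $A=\bigcup_{i<k}(A\cap E_i)$ expresses $A$ as a finite union of $\mathcal{T}$-closed sets, so irreducibility of $A$ (again absolute) yields an index $i$ with $A\subseteq E_i$. Then $E_i$ is an irreducible $\mathcal{T}$-closed set coded in $M_b$ that contains $A$, so minimality of $C_b$ gives $C_b\subseteq E_i\subseteq C_b$; thus $C_b=E_i$ is irreducible.

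The one point demanding care is the claim that the irreducible components $E_i$ are coded in $M_b$ rather than merely in $M_n$. This is precisely where the uniqueness of the irreducible decomposition and the absoluteness of irreducibility recorded just after the definition of analytic topology do the work; without uniqueness one could not transfer the decomposition between the two models. Everything else is bookkeeping, and the degenerate possibilities ($A$ is nonempty by hypothesis, while $C_b$ might equal $X$, which causes no trouble) are harmless.
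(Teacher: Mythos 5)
Your proof is correct, and the monotonicity half is exactly the paper's argument. For irreducibility, however, you take a longer route than the paper does. The paper argues directly by contradiction: assume $C_b$ is reducible, transfer that reducibility to $M_b$ by Shoenfield absoluteness, write $C_b=\bigcup_{j\in i}D_j$ inside $M_b$ as a finite union of \emph{proper} $\mathcal{T}$-closed subsets (not necessarily irreducible), note that irreducibility of $A$ forces $A\subseteq D_j$ for some $j$, and contradict minimality of $C_b$. You instead pass up to the largest model $M_n$, form the unique irredundant decomposition of $C_b$ into irreducible components there, pull it back down to $M_b$ using absoluteness of irreducibility together with uniqueness of the decomposition, and only then apply irreducibility of $A$ and minimality of $C_b$. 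The reasoning is sound, but uniqueness of the irreducible decomposition is extra machinery that the direct contradiction argument never touches — any finite union of proper $\mathcal{T}$-closed subsets witnessing reducibility in $M_b$ already does the job. Your version does produce a positive identification of $C_b$ with a specific irreducible component rather than a bare contradiction, which some readers may prefer, but it adds length without adding strength and introduces a dependence on uniqueness that the paper avoids.
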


\begin{proof}
For the first sentence, assume towards a contradiction that $C_b$ is not irreducible. By a Shoenfield absoluteness argument, $C_b$ is not irreducible in $M_b$ and one can express in $M_b$ the set $C_b$ as a union of $\mathcal{T}$-closed proper subsets: $C_b=\bigcup_{j\in i}D_j$. The irreducibility of the set $A$ then shows that there is $j\in i$ such that $A\subseteq D_j$, contradicting the minimal choice of $C_b$. The second sentence of the claim is immediate. 
\end{proof}

\noindent Now, suppose towards a contradiction that the conclusion of the proposition fails, i.e.\ $C_0\neq C_{\{i\}}$ for any $i\in n$.

\begin{claim}
For every nonempty set $a\subset n$ there is a subset $b\subset a$ of cardinality $|a|-1$ such that $C_a$ is not coded in $M_b$.
\end{claim}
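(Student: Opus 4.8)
The plan is to prove the claim by contradiction, working under the standing assumption that $C_0\neq C_{\{i\}}$ for every $i\in n$. Fix a nonempty $a\subset n$ and suppose, towards a contradiction, that $C_a$ is coded in $M_b$ for every $b\subset a$ of cardinality $|a|-1$; equivalently, $C_a$ is coded in $M_{a\setminus\{i\}}$ for every $i\in a$. The whole point of the argument will be to squeeze this down to: $C_a$ is coded in $M_\emptyset=V$.

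The first step invokes the first claim of this proof with the ambient index set taken to be $a$: it produces an inclusion-smallest set $b^{*}\subseteq a$ for which $C_a$ is coded in $M_{b^{*}}$. Being inclusion-smallest, $b^{*}$ is contained in every $b\subseteq a$ with $C_a$ coded in $M_b$; in particular $b^{*}\subseteq a\setminus\{i\}$ for each $i\in a$, whence $b^{*}\subseteq\bigcap_{i\in a}(a\setminus\{i\})=\emptyset$. So $C_a$ is coded in $V$.

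The second step is a short computation with the monotonicity assertion of the second claim ($c\subseteq b$ implies $C_b\subseteq C_c$). Since $C_a$ is a $\mathcal{T}$-closed set coded in $V$ with $A\subseteq C_a$, the minimal choice of $C_0$ gives $C_0\subseteq C_a$; and $\emptyset\subseteq a$ gives $C_a\subseteq C_0$; hence $C_a=C_0$. Now pick any $i\in a$, possible since $a$ is nonempty. From $\{i\}\subseteq a$ we get $C_a\subseteq C_{\{i\}}$, and from $\emptyset\subseteq\{i\}$ we get $C_{\{i\}}\subseteq C_0$; combining, $C_{\{i\}}=C_0$, contradicting the standing assumption. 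This will complete the proof.

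I do not expect a genuine obstacle: given the two preceding claims the argument is essentially bookkeeping. The one point that wants a moment's attention is that the "inclusion-smallest" set from the first claim is contained in \emph{every} coding set and not merely inclusion-minimal among them; this is immediate from the uniqueness of inclusion-minimal coding sets proved there, and it also makes the degenerate case $|a|=1$ go through automatically (the sole candidate $b$ is then $\emptyset$ itself, so $b^{*}\subseteq\emptyset$ is forced directly).
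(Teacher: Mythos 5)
Your proof is correct and is essentially the paper's own argument run in the contrapositive direction: the paper takes the inclusion-smallest $c\subseteq a$ with $C_a$ coded in $M_c$, rules out $c=\emptyset$ by exactly your computation ($C_0=C_a\subseteq C_{\{i\}}\subseteq C_0$ against the standing assumption), and then picks $b=a\setminus\{i\}$ for some $i\in c$. No gap; nothing further is needed.
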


\begin{proof}
Let $c\subset a$ be the inclusion-smallest set such that $C_a\in M_c$. The set $c$ is nonempty: otherwise, it would be the case that $C_0=C_a$, while for every index $i\in a$ it must be the case that $C_a\subseteq C_{\{i\}}$ while $C_0\not\subseteq C_{\{i\}}$. Now, choose any set $b\subset a$ of cardinality $|a|-1$ such that $c\not\subseteq b$; the set $b$ works by the minimal choice of $c$.
\end{proof}

\noindent Now, use the claim repeatedly to construct by downward recursion on $i\leq n$ sets $b_i\subset n$ such that for all $i\leq n$ $|b_i|=i$, the sets are linearly ordered with respect to inclusion, and $C_{b_i}$ is not coded in $M_{b_{i-1}}$. The sets $C_{b_i}$ for $i\leq n$ then form a sequence of nonempty irreducible sets strictly decreasing by inclusion. This contradicts the assumption on the Krull dimension of the topology $\mathcal{T}$.
\end{proof}

\noindent A number of interesting examples can now be produced via manipulation of Cohen forcings. If $X$ is a Polish space then the Cohen forcing $P_X$ is the poset of nonempty open subsets of $X$ ordered by inclusion. It adds a point $\dot x\in X$, the unique point in the intersection of open sets in the generic filter. It is not difficult to see \cite[Proposition 3.1.1]{z:geometric} that if $f\colon X\to Y$ is a continuous open map between Polish spaces then $P_X\Vdash\dot f(\dot x)$ is a point $P_Y$-generic over the ground model.

\begin{example}
\label{mainexample}
Let $n\geq 1$ be a number and $\eps>0$ be a positive real number. Let $Z=\{\langle z_0, z_1\rangle\in\mathbb{R}^n\times\mathbb{R}^n\colon d_(z_0, z_1)=\eps\}$, where $d$ is the Euclidean distance in $\mathbb{R}^n$. $Z$ is a closed subset of $\mathbb{R}^n\times\mathbb{R}^n$ equipped with the inherited Polish topology. Let $\langle z_0, z_1\rangle$ be a $P_{Z}$-generic pair over the ground model $V$. Then

\begin{enumerate}
\item both $z_0, z_1$ are generic elements of $\mathbb{R}^n$ over $V$;
\item the models $V[z_0]$ and $V[z_1]$ are mutually $n$-Noetherian.
\end{enumerate}
\end{example}

\begin{proof}
The first item follows from the fact that the projection function from $Z$ to any of the two coordinates is open. The second item is the heart of the matter. Suppose towards a contradiction that it fails. Then, by Proposition~\ref{enoughproposition}, in the ground model there must be a $K_\gs$ Polish space $X$, an analytic Noetherian topology $\mathcal{T}$ on $X$ of Krull dimension smaller than $n$, a $P_{\mathbb{R}^n}$-name $\tau$ for an irreducible $\mathcal{T}$-closed subset of $X$, and a condition $p\in P_{X_n}$ which forces $C(V, \tau/\dot x_1)\neq C(V[\dot x_0], \tau/\dot x_1)$.

Shrinking the condition $p$ if necessary, it is possible to find nonempty open sets $O_0, O_1\subset\mathbb{R}^n$ such that $p=(O_0\times O_1)\cap X_n$. It is a simple exercise in Euclidean geometry \cite[Claim 4.9]{z:krull} is to find nonempty open sets $O_{0i}\subset O_0$ for $i\in n$ such that for every tuple $\langle x_i\colon i\in n\in\prod_{i\in n}O_{in}$ there is a unique point $x_n$ in $O_1$ which is at distance $\eps$ from every point $x_i$ for $i\in n$. Let $Y=\{\langle x_i\colon i\in n+1\rangle\colon \forall i\in n x_i\in O_{0i}$, $x_n\in O_1$ and $d(x_i, x_n)=\eps\}$ This is a $G_\gd$-subset of $\mathbb{R}^n)^{n+1}$ at therefore Polish in the inherited topology. Consider the poset $P_Y$ of relatively open subsets of $Y$, and consider a tuple $\langle x_i\colon i\in n+1\rangle\in Y$ $P_Y$-generic over $V$.

\begin{claim}
\label{mutualclaim}
The tuple $\langle x_i\colon i\in n\rangle$ consists of mutually Cohen generic points of $\mathbb{R}^n$ over $V$.
\end{claim}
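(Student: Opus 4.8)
The plan is to show that the coordinate maps exhibit $\langle x_i \colon i \in n\rangle$ as the image of the $P_Y$-generic point under a continuous open surjection onto $\prod_{i\in n} O_{0i}$, and that this product poset is (forcing-equivalent to) the finite product of the Cohen forcings $P_{O_{0i}}$, hence the finite product of copies of $P_{\R^n}$.

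First I would observe that by the choice of the open sets $O_{0i}$, the map $f\colon Y \to \prod_{i\in n} O_{0i}$ sending $\langle x_i \colon i \in n+1\rangle$ to $\langle x_i \colon i \in n\rangle$ is a bijection: surjectivity and injectivity both follow from the ``unique point $x_n$ at distance $\eps$'' property, and $x_n$ depends continuously on $\langle x_i \colon i \in n\rangle$ (it is cut out by a nondegenerate system of sphere equations, so one can invoke the implicit function theorem or simply the explicit formula for the circumcenter). Thus $f$ is a homeomorphism of $Y$ onto the open box $\prod_{i \in n} O_{0i} \subseteq (\R^n)^n$, and $P_Y$ is isomorphic as a forcing to $P_{\prod_i O_{0i}}$. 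Next, each factor $O_{0i}$ is a nonempty open subset of $\R^n$, so $P_{O_{0i}}$ is densely (in fact, naturally) isomorphic to $P_{\R^n}$; and $P_{\prod_i O_{0i}}$ is densely embedded by the finite product $\prod_{i\in n} P_{O_{0i}}$, since open boxes form a basis for the product topology. Composing, the $P_Y$-generic tuple $\langle x_i \colon i \in n\rangle$ is exactly a $\prod_{i\in n} P_{\R^n}$-generic tuple over $V$, which is the assertion of the claim. (The leftover coordinate $x_n$ lies in $V[x_i \colon i \in n]$ and plays no role here.)

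I expect the only genuine content to be verifying that $x_n$ is a continuous function of the other coordinates on the chosen box, i.e.\ that the circumcenter map is well-defined and continuous there; this is precisely what is packaged in the cited Euclidean-geometry fact \cite[Claim 4.9]{z:krull}, which already guarantees existence and uniqueness of $x_n \in O_1$, and continuity then comes for free because $Y$ is the graph of that map and is $G_\gd$, hence the projection $f$ is a continuous open bijection between Polish spaces and so a homeomorphism. The remaining steps — identifying $P_Y$ with a Cohen box and the box with the finite product of copies of $P_{\R^n}$ — are standard manipulations of Cohen forcing of the kind already used in the paragraph preceding Example~\ref{mainexample}, so no real obstacle arises there.
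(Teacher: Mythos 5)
Your argument is correct and follows essentially the same route as the paper: the paper's proof also observes that $Y$ is the graph of a continuous function from $\prod_{i\in n}O_{0i}$ to $O_1$, so that the projection to the domain is open (indeed a homeomorphism), and then pushes the $P_Y$-generic through to get a generic for the Cohen forcing on the open box, i.e.\ a mutually generic tuple. Your additional remarks on the dense embedding of $\prod_{i\in n}P_{O_{0i}}$ into $P_{\prod_i O_{0i}}$ and on the continuity of the circumcenter map simply make explicit what the paper leaves implicit.
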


\begin{proof}
$Y$ is a graph of a continuous function from $\prod_iO_{0i}$ to $O_1$. The projection of a graph of a continuous function to its domain is an open map.
\end{proof}

\begin{claim}
\label{secondclaim}
For each $i\in n$, the pair $\langle x_i, x_n\rangle\in X_n$ is generic over $V$ for the poset $P_{X_n}$.
\end{claim}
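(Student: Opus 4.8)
The plan is to realize $\langle x_i, x_n\rangle$ as the image of the $P_Y$-generic tuple under a continuous open map into $X_n$, and then quote \cite[Proposition 3.1.1]{z:geometric}. Fix $i\in n$ and write the generic tuple as $\langle x_j\colon j\in n+1\rangle$, so that the task is to show that $\langle x_i, x_n\rangle$ is $P_{X_n}$-generic over $V$. Let $f_i\colon Y\to X_n$ be the map $\langle x_j\colon j\in n+1\rangle\mapsto\langle x_i, x_n\rangle$. Every tuple in $Y$ satisfies $d(x_i, x_n)=\eps$, so $f_i$ does land in $X_n=Z$; and $f_i$ is continuous, being a restriction of a projection onto two coordinates. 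Both $Y$ (a $G_\gd$ subspace of $(\R^n)^{n+1}$) and $X_n$ (a closed subspace of $\R^n\times\R^n$) are Polish, so once $f_i$ is known to be open, the quoted proposition immediately yields that $f_i$ of the $P_Y$-generic tuple, namely $\langle x_i, x_n\rangle$, is $P_{X_n}$-generic over $V$. So everything reduces to the openness of $f_i$.

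For this I would take a nonempty relatively open $W\subseteq Y$ and a point $\langle x_i^*, x_n^*\rangle\in f_i[W]$, witnessed by a tuple $\langle x_j^*\colon j\in n+1\rangle\in W$, and fix open sets $U_j\subseteq O_{0j}$ for $j\in n$ and $U_n\subseteq O_1$ with $\langle x_j^*\colon j\in n+1\rangle\in\bigl(\prod_{j\in n+1}U_j\bigr)\cap Y\subseteq W$. I then claim that every pair $\langle x_i, x_n\rangle\in X_n$ with $x_i\in U_i$, with $x_n\in U_n$, and with $x_n$ close enough to $x_n^*$ already lies in $f_i[W]$; since $x_i^*\in U_i$ and $x_n^*\in U_n$, this produces a relatively open neighborhood of $\langle x_i^*, x_n^*\rangle$ inside $f_i[W]$. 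Given such a pair, I would set $x_j=x_j^*+(x_n-x_n^*)$ for $j\in n\setminus\{i\}$. This translation preserves the defining spheres, so $d(x_j, x_n)=d(x_j^*, x_n^*)=\eps$, and by continuity of translation $x_j\in U_j$ for every such $j$ once $x_n$ is close enough to $x_n^*$. The tuple $\langle x_j\colon j\in n+1\rangle$ then has each of its first $n$ coordinates in the corresponding $O_{0j}$, has $x_n\in O_1$, and satisfies $d(x_j,x_n)=\eps$ for every $j\in n$ --- the instance $j=i$ being exactly the assumption $\langle x_i, x_n\rangle\in X_n$ --- so it belongs to $\bigl(\prod_{j\in n+1}U_j\bigr)\cap Y\subseteq W$ and $f_i$ sends it to $\langle x_i, x_n\rangle$. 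This establishes openness.

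The only step needing real care is the openness of $f_i$; the rest is bookkeeping or a direct citation. Inside the openness argument the decisive device is the translation $x_j=x_j^*+(x_n-x_n^*)$: it lets $x_n$ move a little while the remaining coordinates are carried along so as to stay on their $\eps$-spheres, and it is available precisely because one fixed radius $\eps$ governs the whole configuration. This is in the same spirit as the openness used in Claim~\ref{mutualclaim}, the only new feature being that $f_i$ is not a bare projection and lands in the distance space $X_n$. I do not expect any further difficulty.
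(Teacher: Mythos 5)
Your proposal is correct and follows the same route the paper takes: the paper's entire proof of this claim is the single assertion that the projection from $Y$ to the pair of coordinates $(i,n)$ is open into $Z$ (so that \cite[Proposition 3.1.1]{z:geometric} applies), and you supply a clean verification of that openness via the translation $x_j=x_j^*+(x_n-x_n^*)$ for $j\in n\setminus\{i\}$, which preserves the defining sphere conditions and keeps the tuple in the chosen basic open box for $x_n$ sufficiently near $x_n^*$. This fills in detail the paper leaves implicit but adds no new idea; the argument is sound.
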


\begin{proof}
The projection function from $Y$ to any pair of coordinates including $n$ is open from $Y$ to $Z$.
\end{proof}

\noindent Now, consider the set $\tau/x_n\subset X$. By Claim~\ref{mutualclaim} and Proposition~\ref{neatproposition}, there is a number $i\in n$ such that $C(V, \tau/x_n)=C(V[x_i], \tau/x_n)$. This, however, contradicts the initial assumptions on the name $\tau$ in view of Claim~\ref{secondclaim}.
\end{proof}

\section{A preservation theorem}

This section provides a preservation theorem connecting the Noetherian properties of generic extensions with an independence result. Recall that a Suslin forcing is a pair $\langle P, \leq\rangle$ such that $P$ is an analytic subset of some ambient Polish space, $\leq$ is a transitive relation on $P$ containing the diagonal, and $\leq$ and the incompatibility relations are both analytic subsets of $P^2$.

\begin{definition}
Let $P$ be a Suslin poset and $n\geq 1$ be a number.

\begin{enumerate}
\item A pair $\langle Q, \gs\rangle$ is $n$\emph{-Noetherian balanced} if $Q\Vdash\gs\in P$, and whenever $V[H_0], V[H_1]$ are mutually $n$-Noetherian extensions and in each there are respective filters $G_0, G_1\subset Q$ generic over $V$ and conditions $p_0\leq \gs/G_0$ and $p_1\leq\gs/G_1$, then $p_0, p_1$ are compatible in $P$. 
\item The poset $P$ is balanced of dimension characteristic $n$ if below every condition $p\in P$ there is an $n$-Noetherian balanced pair $\langle Q, \gs\rangle$ such that $Q\Vdash\gs\leq\check p$.
\end{enumerate}
\end{definition}

\begin{theorem}
\label{preservationtheorem}
In every generic extension of the choiceless Solovay model which is $\gs$-closed and cofinally $n$-Noetherian balanced, for every non-meager set $A\subset\mathbb{R}^n$ there is a real number $\eps(A)>0$ such that for every positive real number $\eps<\eps(A)$ there are two points in $A$ at distance $\eps$ from each other.
\end{theorem}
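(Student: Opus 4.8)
The plan is to argue by contradiction, the point being that a $\gs$-closed, $n$-Noetherian balanced forcing over the Solovay model forces the canonical name for a non-meager subset of $\R^n$ to be computed by a balanced virtual condition, hence to land back inside the Solovay model, where the Baire property is available. So let $W$ be the choiceless Solovay model coming from the Levy collapse $V[G]$ of the inaccessible $\kappa$, let $P\in W$ be $\gs$-closed and cofinally $n$-Noetherian balanced, and let $H\subset P$ be generic over $W$. Since $P$ is $\gs$-closed, $\R^n$ with its Borel structure, the collection of its countable sequences of Borel codes, and the meager ideal on its $W$-coded subsets are unchanged in $W[H]$; in particular $\R^n\cap W[H]=\R^n\cap W$ and meagerness of a $W$-coded set is absolute between $W$ and $W[H]$. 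Suppose towards a contradiction that $A\subset\R^n$ is non-meager in $W[H]$ yet, for every $\gd>0$, some positive $\eps<\gd$ is not realized as a distance inside $A$. By the standard analysis of the Solovay model and homogeneity of the collapse, fix a formula $\gf$, a condition $p\in H$, and a real $z$ lying in an intermediate model $V[G_{<\ga}]$, $\ga<\kappa$, so that $A$ is the realization of the canonical name $\dot A$ given by $\gf$ with parameter $\check z$ and the generic filter, and so that $p\Vdash$ ``$\dot A$ is non-meager''.

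The first and central step is to prove $\dot A/H\in W$. Below $p$ fix an $n$-Noetherian balanced pair $\langle Q,\gs\rangle$ with $Q\Vdash\gs\leq\check p$, which by the general theory of balanced Suslin forcing over the Solovay model may be taken coded in an intermediate model $V[G_{<\gb}]$, $\ga\leq\gb<\kappa$. Let $A_{\langle Q,\gs\rangle}=\{x\in\R^n: Q\Vdash\gs\Vdash_P\check x\in\dot A\}$, an object of $W$ definable through the forcing relation of $Q$. I claim that for a $Q$-generic $K$ the virtual condition $\gs/K$ decides $\check x\in\dot A$ for every $x\in\R^n$, with the decision independent of $K$ and recorded by $A_{\langle Q,\gs\rangle}$; granting this and that the generic $H$ meets balanced virtual conditions densely below $p$, one gets $\dot A/H=A_{\langle Q,\gs\rangle}\in W$. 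The decision claim is where the dimension enters: given $x\in\R^n$, pick a box $O$ with $x\in O$ and, over $V[G_{<\gb}]$, a $P_Z$-generic pair of points of $\R^n$ at distance $\eps'$ for some $\eps'$ below the diameter of $O$ and $Z=\{\langle y_0,y_1\rangle\in O\times O: d(y_0,y_1)=\eps'\}$; by Example~\ref{mainexample} the two resulting extensions are mutually $n$-Noetherian, and by Proposition~\ref{productproposition} this survives adjoining mutually generic copies $K_0,K_1$ of $Q$, so the compatibility clause in the definition of $n$-Noetherian balance applies to conditions below $\gs/K_0$ and below $\gs/K_1$. A swap automorphism of $P_Z$ makes the two copies symmetric, so were $\gs/K$ to fail to decide $\check x\in\dot A$ one could place a condition forcing $\check x\in\dot A$ below $\gs/K_0$ and one forcing $\check x\notin\dot A$ below $\gs/K_1$, contradicting their compatibility; the same amalgamation rules out two generics deciding $\check x$ oppositely. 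Finally $\gs$-closedness, through the identification $\R^n\cap W=\R^n\cap W[H]$, turns ``$A_{\langle Q,\gs\rangle}$ records $\dot A/H$ on the old reals'' into ``$\dot A/H=A_{\langle Q,\gs\rangle}$''.

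The conclusion is then immediate. The set $\dot A/H$ lies in $W$ and is non-meager in $W[H]$, hence (by the $\gs$-closed absoluteness noted above) non-meager in $W$; being a set in the Solovay model it has the Baire property there, so it is comeager in some basic open box $O\subset\R^n$. Put $\eps(A)=\diam(O)$, fix $\gd=\diam(O)$ in the failure hypothesis, and let $\eps<\diam(O)$ be an unrealized distance. Choose a direction $v$ along which the width of $O$ exceeds $\eps$ --- possible because the maximal width of a box over all directions equals its diameter --- so that $O\cap(O-\eps v)$ is a nonempty open set; then $\dot A/H$ and its translate $\dot A/H-\eps v$ are each comeager in $O\cap(O-\eps v)$, so their intersection is nonempty, and any point $x$ in it gives $x,x+\eps v\in \dot A/H=A$ with $d(x,x+\eps v)=\eps$. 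This contradicts the choice of $\eps$.

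The main obstacle is the second paragraph: showing that the canonical name for a non-meager subset of $\R^n$ is decided by a balanced virtual condition, so that its realization re-enters the Solovay model. This is exactly where $n$ is used essentially --- the certifying amalgamations must be run along pairs of points at distance $\eps'$ in $\R^n$, which Example~\ref{mainexample} turns into mutually $n$-Noetherian extensions, so $n$-Noetherian (as opposed to merely $(n+1)$-Noetherian) balance is precisely what is needed; a pair in $\R^{n+1}$ would yield only mutually $(n+1)$-Noetherian extensions and the balance clause would not apply, which is the structural reason the statement must fail one dimension higher. The remaining chores are routine but require care: reflecting $\langle Q,\gs\rangle$ and the name $\dot A$ to intermediate collapse models so that Example~\ref{mainexample} and Proposition~\ref{productproposition}, which are ZFC theorems, can be invoked over those models while the witnessing points stay in $\R^n\cap W$; and the standard verification that the generic filter of a cofinally balanced Suslin forcing meets balanced virtual conditions densely below each of its conditions.
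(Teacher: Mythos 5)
Your argument has a fatal gap at its central step, the claim that $\dot A/H\in W$; that claim is false for exactly the forcings the theorem is meant to apply to. Observe first that your ``decision claim'' for a fixed $x$ does not really need the distance-$\eps'$ pair: for a fixed point $x$ of the intermediate model, plain mutually generic extensions already suffice (they are mutually $m$-Noetherian for every $m$ by the corollary to Proposition~\ref{productproposition}), so the decision of $\check x\in\dot A$ by a balanced virtual condition is available for any balanced $\gs$-closed Suslin forcing whatsoever. Hence, if the passage from ``each virtual condition decides $\check x\in\dot A$ for each fixed $x$'' to ``$\dot A/H\in W$'' were valid, it would show that every such forcing adds no new non-meager set of reals over the Solovay model. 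This is absurd: take $P$ to be the coloring poset of Theorem~\ref{posettheorem} for $\Gamma_{n-1}$ on $\mathbb{R}^{n-1}$, which is $n$-Noetherian balanced by Corollary~\ref{balancecorollary}. Its generic filter partitions $\mathbb{R}^{n-1}$ into countably many color classes; under DC one class $A_c$ is non-meager, and $A_c\times\mathbb{R}\subset\mathbb{R}^n$ is non-meager by Kuratowski--Ulam. Yet $A_c\times\mathbb{R}$ contains no non-meager set coded in $W$: such a set would have the Baire property in $W$, be comeager in a box, and force two points of the anticlique $A_c$ to lie at rational distance. The precise point of failure in your write-up is the phrase ``granting \dots that the generic $H$ meets balanced virtual conditions densely below $p$'': a balanced pair $\langle Q,\gs\rangle$ lives in one intermediate model $V[G_{<\gb}]$ and decides $\check x\in\dot A$ only for the countably many $x\in\mathbb{R}^n\cap V[G_{<\gb}]$; the filter $H$ lies below no single virtual condition deciding all of $\mathbb{R}^n\cap W$, and the decisions of the various virtual conditions met by $H$ assemble into a set that genuinely depends on $H$. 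Your final Steinhaus-type paragraph is fine, but it is fed a false premise.

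The paper's proof never attempts to pull $A$ back into $W$. It first uses $\gs$-closure and DC to fix a countable set $a$ of forbidden distances accumulating at $0$, then uses non-meagerness of $\tau$ to find names $q,r,s,\eta$ such that $\eta$ is forced to be a condition below $\gs$ putting the Cohen-generic point $\dot x$ into $\tau$. It then evaluates $\eta$ at the two endpoints $x_0,x_1$ of a generic pair at distance $\eps\in a$, together with mutually generic filters $H_0,H_1$. Example~\ref{mainexample} makes $V[K][x_0]$ and $V[K][x_1]$ mutually Noetherian of the right dimension, Proposition~\ref{productproposition} propagates this to $V[K][x_0][H_0]$ and $V[K][x_1][H_1]$, and Noetherian balance amalgamates $\eta/x_0,H_0$ with $\eta/x_1,H_1$ into a single condition forcing two points of $\tau$ at the forbidden distance $\eps$. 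The dimension is used exactly there---in amalgamating over a pair of extensions that are \emph{not} mutually generic but only mutually Noetherian---and not in deciding membership of fixed points, which ordinary balance already handles and which is too weak to capture the new non-meager sets added by the forcing.
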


\begin{proof}
Let $\kappa$ be an inaccessible cardinal. Let $P$ be a $\gs$-closed Suslin forcing which is balanced of dimension characteristic $\geq n+1$ cofinally below $\kappa$. Let $W$ be the choiceless Solovay model derived from $\kappa$. Work in the model $W$. Let $p\in P$ be a condition and $\tau$ a $P$-name for a non-meager (or non-null) subset of $\mathbb{R}^n$. Towards a contradiction, assume that $p\Vdash\tau$ is a counterexample to the conclusion of the theorem. By the $\gs$-closure of the poset $P$ and Axiom of Dependent Choices in $W$, strengthening the condition $p$ if necessary, one can find a countable set $a\subset\mathbb{R}^+$ which has zero as an accumulation point and $p\Vdash\tau$ contains no two points whose distance belongs to $a$.

The name $\tau$ is definable from a ground model element and a real parameter $z\in\cantor$. Let $V[K]$ be an intermediate extension obtained by a poset of cardinality smaller than $\kappa$ such that $z, p, a\in V[K]$ in which the poset $P$ is $n+1$-Noetherian balanced. Work in $V[K]$. Let $\langle Q, \gs\rangle$ be an $n+1$-Noetherian balanced pair such that $Q\Vdash\gs\leq p$.

Let $R$ be the Cohen poset of nonempty open subsets of $\mathbb{R}^n$, adding a generic point $\dot x$. There must be conditions $q\in Q$, $r\in R$, and a poset $S$ of cardinality smaller than $\kappa$, a condition $s\in S$ and a $R\times Q\times S$-name $\eta$ for a condition stronger than $\gs$ such that

$$q\Vdash_Qr\Vdash_Rs\Vdash_S\coll(\gw, <\kappa)\Vdash \eta\Vdash_P\dot x\in\tau.$$ 

\noindent Otherwise, in the model $W$, for any generic filter $H\subset Q$ the condition $\gs/H\leq p$ would force in $P$ that $\tau$ is disjoint from the co-meager set of elements of $\mathbb{R}^n$ Cohen-generic over $V[K][H]$, contradicting the initial assumptions on $\tau$ and $p$. Let $\eps\in a$ be a real number smaller than the radius of some open ball which is a subset of $r$, and let $T$ be the poset for adding a generic pair of points in $\mathbb{R}^n$ of distance $\eps$ as in Example~\ref{mainexample}.

Work in the model $W$. Let $\langle x_0, x_1\rangle\in X_n$ be a pair $T$-generic over $V[K]$ below $r\times r$. Let $H_0, H_1\subset Q\times S$ be filters mutually generic over the model $V[K][x_0, x_1]$ meeting the conditions $q\in Q, s\in S$. Consider the conditions $p_0=\eta/x_0, H_0$ and $p_1=\eta/x_1, H_1$. Note that the points $x_0, x_1$ are $R$-generic over $V[K]$ and the models $V[K][x_0]$, $V[K][x_1]$ are $n+1$-mutually Noetherian by Example~\ref{mainexample}. The models $V[K][x_0][H_0]$ and $V[K][x_1][H_1]$ are $n+1$-mutually Noetherian by Proposition~\ref{productproposition}. The initial assumption on the balanced pair $\langle Q, \gs\rangle$ shows that the conditions $p_0, p_1$ are compatible. Their common lower bound forces the points $x_0, x_1$ into $\tau$, while their distance belongs to the set $a$. This contradicts the choice of the set $a$.
\end{proof}

\section{A coloring poset}

Finally, this section provides, for a given number $n\geq 1$ a definition of a Suslin forcing which, if used in the choiceless Solovay model, adds a countable coloring of the graph $\Gamma_n$ yet keeps the chromatic number of $\Gamma_{n+1}$ uncountable. In fact, there is a forcing which colors a much larger class of graphs on $\mathbb{R}^n$. This is recorded in the following theorem.

\begin{theorem}
\label{posettheorem}
Let $n\geq 1$ be a number. Let $\Gamma$ be a $\gs$-algebraic graph on $\mathbb{R}^n$ containing no perfect clique. There is a Suslin $\gs$-closed forcing $P$ such that

\begin{enumerate}
\item $P\Vdash$the union of the generic filter is a total $\Gamma$-coloring with countable range;
\item $P$ is $k, 2$-centered for every number $k\in\gw$;
\item under the Continuum Hypothesis, $P$ is $n+1$-Noetherian.
\end{enumerate}
\end{theorem}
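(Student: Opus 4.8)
The plan is to define the poset $P$ explicitly as a set of partial colorings together with bookkeeping side conditions, then verify the three clauses in turn. A condition in $P$ should be a pair $p = \langle c_p, a_p\rangle$ where $c_p$ is a countable partial $\Gamma$-coloring — a function from a countable subset of $\mathbb{R}^n$ to $\gw$ which is a proper coloring of the induced subgraph — and $a_p$ is a finite ``side condition'' recording, for finitely many pairs of algebraic sets (the irreducible components of the zero sets of the polynomials $\phi_m$), how the coloring behaves on them; the point of $a_p$ is to guarantee that the countable range is respected and that the coloring extends. Ordering is by extension of $c_p$ together with the obvious consistency requirement on side conditions. Clause (1) is then essentially a density argument: given any real $x\in\mathbb{R}^n$ not yet in $\dom(c_p)$, one must find a color for $x$ avoiding the (finitely many, by $\gs$-algebraicity and the absence of a perfect clique — here one invokes Schmerl's theorem on avoidable polynomial configurations, or rather its finitary core) neighbors of $x$ already colored; since each neighbor condition $\phi_m(x,y)=0$ defines a lower-dimensional algebraic set, only finitely many colors are blocked, and $\gw$ colors suffice. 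That the range stays countable is built into the definition. $\gs$-closure is immediate since conditions are countable and we are taking increasing unions.

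For clause (2), $k,2$-centeredness, I would show that any $k$ conditions sharing the same ``finite core'' — the same side condition $a_p$ and the same restriction of $c_p$ to a suitable finite set determined by $a_p$ — are pairwise compatible, in fact have a common lower bound obtained by amalgamating the colorings. The subtlety is that two colorings with the same core may disagree wildly off the core, so one must argue that the union of $k$ such colorings is still a proper $\Gamma$-coloring; this should follow because any $\Gamma$-edge between $\dom(c_{p_i})$ and $\dom(c_{p_j})$ for $i\neq j$ forces the two endpoints to lie on one of the fixed algebraic sets named in the common side condition, and the side condition was chosen precisely to dictate a consistent color pattern there. This is the familiar ``$\sigma$-$k$-linked with finite witnesses'' packaging, and the hard part is choosing the notion of core finely enough that sharing a core really does imply compatibility, yet coarsely enough that there are only countably many cores.

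Clause (3) — that under CH, $P$ is $n+1$-Noetherian — is where the geometry of $\mathbb{R}^{n+1}$ and Example~\ref{mainexample} enter, and I expect this to be the main obstacle. The statement means: for every $K_\gs$ Polish $X$, every analytic Noetherian topology $\mathcal{T}$ on $X$ of Krull dimension $\leq n$ coded in the ground model, and every set $A\subset X$ in a $P$-generic extension $V[G]$, we have $C(V,A)=C(V[G],A)$. Under CH we may enumerate $P$ in type $\gw_1$ and build a filtration $\langle P_\alpha : \alpha<\gw_1\rangle$ of $P$ into countable subposets so that $P$ is the direct limit; the plan is to show that each step $V[G\restriction P_{\alpha+1}]$ is $n+1$-Noetherian over $V[G\restriction P_\alpha]$ and that this propagates through the limit (a Shoenfield/reflection argument handles limits, since $C(M,A)$ depends only on countably much information and $A$ reflects into some $P_\alpha$). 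The successor step reduces, via the structure of the side conditions, to the statement that the extension generated by a single new color commitment does not shrink any ground-model-coded irreducible $\mathcal{T}$-closed set more than the geometry forces — and here one uses that the relevant algebraic configurations live in $\mathbb{R}^n$, i.e.\ in Krull dimension $n$, so a topology of dimension $\leq n$ cannot ``see'' the $(n+1)$-st coordinate of information that the coloring adds. Concretely I would run an argument parallel to the proof of Proposition~\ref{productproposition} and Example~\ref{mainexample}: take a countable elementary submodel $M$, a generic $h$ for $P_{\alpha+1}\cap M$ over $M$, form the putative smaller closed set $D = \tau/h$, and derive a contradiction from the existence of an open $O$ disjoint from $D$ but meeting the set $A$, by exhibiting two mutually generic copies of the relevant Cohen-type extension whose incompatibility violates the combinatorics of $P$. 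The genuine difficulty is bookkeeping: one must arrange the side-condition apparatus of $P$ so that ``committing a color'' is itself an open, dimension-$n$-algebraic event, which is exactly what makes Example~\ref{mainexample}'s duplication applicable; getting the definition of $P$ tuned so that this works — rather than merely so that clauses (1) and (2) hold — is the crux of the construction.
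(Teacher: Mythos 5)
There is a genuine gap, and it starts with a misreading of two of the three clauses. Clause (3) does not assert that the $P$-generic extension is $n+1$-Noetherian over the ground model (which is what your filtration $\langle P_\alpha:\alpha<\gw_1\rangle$ would be aimed at); it asserts that $P$ is $n+1$-Noetherian \emph{balanced} in the sense of Definition 3.1 --- below every condition there is a virtual condition $\langle Q,\gs\rangle$ such that any two conditions below its realizations, lying in mutually $n+1$-Noetherian extensions $V[H_0],V[H_1]$, are compatible in $P$. Only this formulation feeds into Theorem~\ref{preservationtheorem}. In the paper, CH is used to build an $\gw_1$-increasing union of conditions whose union is a \emph{total} coloring $c$ of $\mathbb{R}^n$ still satisfying the saturation requirement, and the balanced pair is $\langle\coll(\gw,\mathbb{R}),\check c\rangle$; the dimension hypothesis enters because the Zariski topology on $\mathbb{R}^n$ has Krull dimension $n<n+1$, so for a point $x_1\in V[H_1]$ the algebraic hull $B(p_0,x_1)$ over the support of a condition $p_0\in V[H_0]$ coincides with the hull over the ground-model field --- which is exactly what lets one transfer a $\Gamma$-edge from $x_1$ to an already-colored point of $\dom(c)$ and conclude the colors differ. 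Similarly, clause (2) here means that any $k$ \emph{pairwise compatible} conditions have a common lower bound; it is not a $\sigma$-$k$-linked decomposition into countably many cores, and it falls out of a single compatibility characterization rather than a choice of cores.

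The second, more structural gap is in your definition of $P$ and the density argument for (1). The crux of the construction is the ordering: $q\leq p$ must require not only $p\subseteq q$ but also that $q$ introduces \emph{no new colors} on any set algebraic over $\supp(p)$ ($p''A=q''A$), and dually each condition must attain every ``legal'' color infinitely often on every uncountable irreducible algebraic set over its support (with supports being countable real closed fields and domains of the form $F^n$, so that the quantifier-elimination corollaries apply). Your density step --- ``each neighbor condition defines a lower-dimensional set, so only finitely many colors are blocked'' --- is not the issue and is not even quite true as stated: the set of already-colored neighbors $\{y\in\dom(c_p):\phi_m(x,y)=0\}$ can be infinite, and more importantly the new point's color cannot be chosen freely from $\gw$; it must be one already realized infinitely often on the algebraic hull of $x$ over $\supp(p)$, or the ordering requirement (and with it the entire amalgamation machinery behind (2) and (3)) collapses. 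A finite side condition naming finitely many algebraic sets cannot police the countably many algebraic sets over a countable real closed field uniformly; this is why the paper bakes the saturation requirement into the conditions themselves and proves compatibility by a transfinite ``approximation'' amalgamation rather than by matching finite cores.
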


\subsection{Preliminaries}

The proof of Theorem~\ref{posettheorem} requires a fair amount of familiarity with real algebraic geometry. The following remarks record some of the facts used. The theory of \emph{real closed fields} uses the ordering symbol, addition, multiplication, and $0, 1$ constants, and includes the axioms of ordered fields as well as axioms stating that every polynomial of odd degree has a root. A good reference for treatment of real closed fields is \cite[Section 3.3]{marker:book}

\begin{fact}
The theory of real closed fields admits quantifier elimination.
\end{fact}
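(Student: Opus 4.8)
The plan is to invoke the standard model-theoretic criterion for quantifier elimination: a theory $T$ in a language $L$ admits QE if and only if for any two models $M_1,M_2\models T$ sharing a common $L$-substructure $A$, every existential $L$-formula with parameters from $A$ that holds in $M_1$ also holds in $M_2$. A routine preliminary reduction then applies: passing to disjunctive normal form and distributing $\exists x$ over disjunctions (and using that the negation of an atomic formula is, modulo the ordered-field axioms, a disjunction of atomic ones), it suffices to treat formulas $\exists x\,\bigwedge_{i<k}\psi_i(x,\bar a)$ where each $\psi_i$ has the form $p_i(x)=0$ or $p_i(x)>0$ for a polynomial $p_i$ with coefficients among $\bar a$.

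The first real step is to reduce to the case where the common substructure is a real closed field. Substructures of a real closed field are ordered integral domains; their fraction fields are ordered fields embedding over $A$ into both $M_1$ and $M_2$. Using the two substantial algebraic inputs — that every ordered field has a real closure, and that this real closure is unique up to a unique isomorphism fixing the base field (Artin–Schreier theory) — one replaces $A$ by the relative algebraic closure $F$ of $\mathrm{Frac}(A)$ inside $M_1$, which is real closed; the canonical isomorphism of $F$ onto the corresponding relatively real-closed subfield of $M_2$ permits us to assume outright that $F\subseteq M_1$ and $F\subseteq M_2$ with $F$ real closed and our parameters $\bar a$ lying in $F$.

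Now suppose $M_1\models\exists x\,\bigwedge_{i<k}\psi_i(x,\bar a)$ with $\bar a\in F$. If some $\psi_i$ is an equation $p_i(x)=0$ with $p_i$ not identically zero, then any witness is algebraic over $F$; since a real closed field admits no proper ordered algebraic field extension, the witness already lies in $F\subseteq M_2$, and being a solution is a quantifier-free fact, so $M_2$ satisfies the formula as well. Otherwise every $\psi_i$ is a strict inequality $p_i(x)>0$, and here I would invoke the intermediate value property for polynomials over a real closed field: each $p_i$ changes sign only at its finitely many roots in $F$, so the solution set of $\bigwedge_i\psi_i$ in any real closed field extending $F$ is a finite union of open intervals whose endpoints lie in $F\cup\{\pm\infty\}$ and depend only on $\bar a$. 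Nonemptiness of this set in $M_1$ forces one such interval to be nonempty, and a nonempty interval with endpoints in an ordered field stays nonempty in every ordered field extension; hence it contains a point of $M_2$, yielding the required witness.

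The main obstacle is not the model-theoretic scaffolding but the algebraic content compressed into the phrases ``every ordered field has a real closure, unique up to unique isomorphism'' and ``polynomials over a real closed field have the intermediate value property'': establishing the former via Artin–Schreier theory and the latter via a careful count of sign changes is where the actual work lies. An alternative, more computational route would bypass real closures entirely and eliminate the quantifier from $\exists x\,\bigwedge_i\psi_i(x,\bar y)$ directly through Sturm's theorem, expressing uniformly in $\bar y$ the number of real roots of a polynomial in a given interval as a quantifier-free condition on $\bar y$; this avoids saturation arguments at the cost of substantial bookkeeping over the degenerate cases in which leading coefficients vanish.
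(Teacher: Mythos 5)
Your outline is correct: it is the standard Tarski--Seidenberg proof via the substructure test, real closures and their uniqueness (Artin--Schreier), and the sign-change/intermediate-value analysis for the purely inequational case. The paper offers no proof of this Fact, stating it as classical and citing Marker's book (Section 3.3), where essentially the argument you sketch is carried out, so there is nothing to compare beyond noting that your route is the cited one.
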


\begin{corollary}
If $F\subset\mathbb{R}$ is a real closed subfield then $F$ is an elementary submodel of $\mathbb{R}$.
\end{corollary}

\noindent All of the consequences of these facts needed below concern algebraic subsets of Euclidean spaces.

\begin{corollary}
\label{c1}
Every algebraic set in a Euclidean space is either finite or uncountable.
\end{corollary}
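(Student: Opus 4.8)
The plan is to induct on the dimension $n$, reducing the case of $\mathbb{R}^{n+1}$ to that of $\mathbb{R}^n$ by projecting onto one coordinate; \emph{quantifier elimination} enters only through the description of definable subsets of the line.

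For the base case $n=1$: by the Hilbert basis theorem an algebraic set $A\subseteq\mathbb{R}$ is the common zero set of finitely many polynomials $f_1,\dots,f_k\in\mathbb{R}[t]$. If all of them are identically $0$ then $A=\mathbb{R}$ is uncountable; otherwise $A$ is contained in the finite zero set of some nonzero $f_i$. For the inductive step, let $A\subseteq\mathbb{R}^{n+1}$ be the zero set of polynomials $f_1,\dots,f_k$ and let $\pi\colon\mathbb{R}^{n+1}\to\mathbb{R}$ be the projection onto the last coordinate. Then $\pi(A)$ is definable in the real closed field $\mathbb{R}$ by $\exists x_1\cdots\exists x_n\,\bigwedge_i f_i(\bar x,t)=0$, so by quantifier elimination it is defined by a quantifier-free formula in the single variable $t$, i.e.\ a Boolean combination of conditions $p(t)=0$ and $p(t)>0$ with $p\in\mathbb{R}[t]$. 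Since a nonzero univariate polynomial has only finitely many roots, the solution set of each such condition, and hence of every Boolean combination of them, is a finite union of points and open intervals. Thus $\pi(A)$ is a finite union of points and open intervals.

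Now there are two cases. If $\pi(A)$ contains an open interval $I$, then the fiber $A_t=\{\bar x\colon(\bar x,t)\in A\}$ is nonempty for every $t\in I$, so the surjection $A\to\pi(A)$ gives $|A|\geq|I|=2^{\aleph_0}$ and $A$ is uncountable. If instead $\pi(A)=\{t_1,\dots,t_m\}$ is finite, then $A=\bigcup_{j\leq m}A_{t_j}\times\{t_j\}$, where each $A_{t_j}$ is the zero set in $\mathbb{R}^n$ of the polynomials $f_i(\bar x,t_j)$ and is therefore an algebraic set, hence finite or uncountable by the inductive hypothesis; so $A$ is finite or uncountable as well. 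I do not expect a real obstacle here: the only nonroutine point is the passage from the existential description of $\pi(A)$ to a quantifier-free one, which is precisely quantifier elimination, followed by reading off the order structure of a quantifier-free definable subset of $\mathbb{R}$; one must also note that substituting a constant for the last variable keeps the fibers honest algebraic sets, so that the induction applies. (One could equally cite the Tarski--Seidenberg theorem on projections of semialgebraic sets; the corollary that real closed subfields of $\mathbb{R}$ are elementary submodels is not needed for this particular statement.)
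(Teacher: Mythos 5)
Your proof is correct. The key ingredient is the same as the paper's: use quantifier elimination (equivalently, Tarski--Seidenberg) to see that the projection of an algebraic set onto a single coordinate is semi-algebraic, and then invoke the fact that an infinite semi-algebraic subset of $\mathbb{R}$ contains a nonempty open interval and is therefore uncountable. Where you differ is in the packaging: you run an induction on the dimension, projecting onto the last coordinate and, in the case of a finite projection, passing to the fibers $A_{t_j}$ (which are again algebraic) to apply the inductive hypothesis. The paper avoids the induction entirely by projecting onto \emph{every} coordinate at once: since $A\subseteq\prod_{i<n}A_i$, if all coordinate projections $A_i$ are finite then $A$ is finite, and if some $A_i$ is infinite then it contains an interval, hence is uncountable, and $A$ surjects onto it, so $A$ is uncountable. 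The paper's variant is a little shorter and makes clear that the result holds for all semi-algebraic sets, not just algebraic ones; your inductive variant is equally valid and perhaps more elementary to read off, at the cost of having to check that substituting a constant into the last variable keeps the fibers algebraic, which you correctly note.
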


\begin{proof}
This is true even for semi-algebraic sets. Let $n\geq 1$ be a number and $A\subset\mathbb{R}^n$ be a semi-algebraic set. Consider the projections $A_i$ of $A$ into each coordinate $i\in n$. If these sets are all finite, then $A$ is finite. Assume that some $A_i$ is infinite. Since it is definable by a quantifier-free formula in the language of real closed fields and it is infinite, it must contain a nonempty open interval. Thus, $A_i$ is uncountable and so is $A$.
\end{proof}

\noindent If $F\subset \mathbb{R}$ is a real closed subfield and $n\geq 1$ is a number, a set $A\subset\mathbb{R}^n$ is \emph{algebraic over $F$} if there is a polynomial $\phi(\bar u)$ of $n$ many variables and coefficients in $F$ such that $A=\{x\in \mathbb{R}^n\colon \phi(x)=0\}$. Several closure properties of the class of sets algebraic over $F$ will be needed. It is immediate that this class is closed under unions, intersections, and sections indexed by elements of $F$. The following closure properties are more delicate.

\begin{corollary}
\label{c2}
If $F\subset\mathbb{R}$ is a real closed field, $n, m\geq 1$, and $A\subseteq \mathbb{R}^m$, $B\subseteq\mathbb{R}^n\times\mathbb{R}^m$ are sets algebraic over $F$, then the set $C=\{x\in\mathbb{R}^n\colon \forall y\in A\ \langle x, y\rangle\in B\}$ is a set algebraic over $F$.
\end{corollary}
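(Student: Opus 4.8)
The plan is to reduce the universal quantifier over the set $A$ to a single polynomial equation by exploiting quantifier elimination together with Corollary~\ref{c1}. First I would observe that the condition ``$\forall y\in A\ \langle x,y\rangle\in B$'' is itself a first-order formula in the language of real closed fields with parameters in $F$ (namely the coefficients of the two defining polynomials), so by quantifier elimination the set $C$ is semi-algebraic over $F$: there is a quantifier-free formula $\psi(\bar u)$ with coefficients in $F$ defining $C$. This already shows $C$ is semi-algebraic, but the corollary asks for $C$ to be the zero set of a single polynomial, i.e.\ Zariski-closed over $F$, so the real work is to upgrade ``semi-algebraic'' to ``algebraic''.

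The key structural point is that $C$ is an intersection of algebraic sets: writing $\phi_B(\bar u,\bar v)$ for the polynomial defining $B$, we have $C=\bigcap_{y\in A}\{x\in\mathbb{R}^n\colon \phi_B(x,y)=0\}$. Each set in this intersection is algebraic (over $\mathbb{R}$, and over $F$ when $y\in F^m$), hence $C$ is Zariski-closed in $\mathbb{R}^n$; by the Hilbert basis theorem it is cut out by finitely many polynomials, so $C$ is an algebraic set over $\mathbb{R}$. What remains is to check it is defined by polynomials with coefficients in $F$. For this I would combine the two facts just established: $C$ is semi-algebraic over $F$ by quantifier elimination, and $C$ is Zariski-closed. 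A semi-algebraic set that is Zariski-closed equals its own Zariski closure, and the Zariski closure of a set semi-algebraic over $F$ is defined over $F$ — one can see this because the ideal of polynomials (with $\mathbb{R}$-coefficients) vanishing on $C$ is generated by polynomials with coefficients in $F$, using the corollary that $F\prec\mathbb{R}$: the statement ``the polynomials $g_1,\dots,g_k$ generate the ideal of $C$'' and ``$C$ is exactly their common zero set'' are expressible over $F$ once a candidate finite generating set with $F$-coefficients is found, and such a set can be found by elementarity since one exists with $\mathbb{R}$-coefficients. Finally, a finite intersection of zero sets of $F$-polynomials is the zero set of a single $F$-polynomial (sum of squares), giving the desired form.

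An alternative, more hands-on route avoids invoking Zariski closure machinery: if $A$ is finite, say $A=\{y_0,\dots,y_{k-1}\}$, then since $A$ is algebraic over $F$ and finite, each $y_j$ has coordinates that are algebraic over $F$, hence (as $F$ is real closed, so relatively algebraically closed in $\mathbb{R}$ among real points) actually lie in $F$; then $C=\bigcap_{j<k}B_{y_j}$ is a finite intersection of sections of $B$ at points of $F^m$, each algebraic over $F$ by the already-noted closure under sections indexed by elements of $F$, and a finite intersection of $F$-algebraic sets is $F$-algebraic. If $A$ is infinite, then by Corollary~\ref{c1} some coordinate projection $A_i$ is uncountable and contains an interval; I would argue that the constraint $\forall y\in A\ \phi_B(x,y)=0$ forces, for fixed $x$, the polynomial $y\mapsto\phi_B(x,y)$ to vanish on a Zariski-dense subset of the (irreducible components of the) algebraic set $A$, hence to vanish on all of $A$'s Zariski closure, and then decompose $A$ into finitely many irreducible pieces and pick finitely many points with $F$-coordinates from each whose vanishing already forces vanishing on the whole piece — reducing again to the finite case.

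The main obstacle I anticipate is precisely this descent of the coefficient field: knowing $C$ is semi-algebraic over $F$ and algebraic over $\mathbb{R}$ does not instantly give that it is algebraic over $F$, and the cleanest justification goes through the statement that the Zariski closure operation commutes with the base field in the sense that a set definable over $F$ has its Zariski closure definable over $F$. I expect this to follow from $F\prec\mathbb{R}$ (the corollary to quantifier elimination) applied to a first-order description of ``being the Zariski closure'', but pinning down that description — in particular handling that the ideal may need more than one generator and that ``generates the vanishing ideal'' must be phrased without quantifying over all polynomials of unbounded degree — is the delicate step; bounding the degree via effective Nullstellensatz-type bounds, or simply via the fact that the relevant ideal is generated in bounded degree once $C$ is fixed as a subset of a fixed Euclidean space, is the technical heart of the argument.
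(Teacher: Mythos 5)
Your main route is correct but genuinely different from the paper's. The paper stays entirely inside the ``intersection of sections'' picture: it writes $C=\bigcap_{y\in A}C_y$, invokes the Hilbert basis theorem to get a finite $a\subset A$ with $C=\bigcap_{y\in a}C_y$, and then uses $F\prec\mathbb{R}$ once, to reflect the first-order statement ``there exists a $k$-element set $b\subset A$ such that $\bigcap_{y\in b}C_y\subseteq C$'' down to $F$; the witness $b$ then lies in $F^m$ and each section $C_y$ for $y\in b$ is algebraic over $F$ by the closure under sections already noted in the paper. Your route instead establishes separately that $C$ is semi-algebraic over $F$ (quantifier elimination) and Zariski-closed (arbitrary intersections of zero sets), and then descends the field of definition. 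That descent is where you locate the ``technical heart,'' but the difficulty you flag there is self-inflicted: you do not need the vanishing ideal of $C$ to be generated over $F$, only that $C$ is the common zero set of polynomials with $F$-coefficients. Since $C$ is algebraic over $\mathbb{R}$, fix $k$ and $d$ so that some $g_1,\dots,g_k$ of degree $\le d$ cut out $C$; the statement ``there exist polynomials of degree $\le d$ in $k$-many whose common zero set is $\{x:\psi(x)\}$'' is an $\exists\forall$ sentence with parameters in $F$ (quantifying only over the finitely many coefficients), true in $\mathbb{R}$, hence true in $F$ by elementarity, and the resulting $F$-witnesses cut out $C$ in $\mathbb{R}^n$ by applying elementarity again to the universal part. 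No Nullstellensatz-type degree bounds on ideal generators are needed. Your alternative finite/infinite case split is closer in spirit to the paper but is vaguer exactly where the paper is precise, namely in producing the finitely many cutting points inside $F^m$; the paper's single elementarity reflection is the clean way to do that. What your primary route buys is a reusable general fact (any Zariski-closed set semi-algebraic over $F$ is algebraic over $F$); what the paper's route buys is brevity and avoidance of any discussion of Zariski closures.
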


\begin{proof}
For each $y\in A$ let $C_y=\{x\in\mathbb{R}^n\colon \langle x, y\rangle\in B\}$; this is an algebraic set and $C=\bigcap_{y\in A}C_y$. By the Hilbert basis theorem, there is a finite set $a\subset A$ such that $C=\bigcap_{y\in a}C_y$. Let $k\in\gw$ be the cardinality of the set $a$. The existence of a set $b\subset A$ of cardinality $k$ such that $\forall x\in\mathbb{R}^m\ (\forall y\in b\ \langle x, y\rangle\in B)\to (\forall y\in A\ \langle x, y\rangle\in B)$ is a first-order statement in the language of real closed fields and as such it is reflected to $F$. Let $b\subset F$ be a witness that $F$ finds, and observe that $C=\bigcap_{y\in b}C_y$ is a set algebraic over $F$.
\end{proof}

\begin{corollary}
\label{c3}
Suppose that $F\subset\mathbb{R}$ is a real closed field and a set $A\subset\mathbb{R}^n$ is algebraic over $F$. If $A$ is reducible, then its irreducible composants are algebraic over $F$.
\end{corollary}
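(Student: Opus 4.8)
The plan is to leverage the immediately preceding corollary, which gives $F \prec \R$ as ordered fields; hence any first-order statement with parameters in $F$ in the language of ordered fields holds in $F$ if and only if it holds in $\R$. Write $A = \{x \in \R^n : \phi(x) = 0\}$ with $\phi$ a polynomial over $F$, and set $A^F = \{x \in F^n : \phi(x) = 0\}$. Since $F[\bar x]$ is Noetherian, $A^F$ decomposes uniquely as a finite irredundant union $A^F = B_1 \cup \cdots \cup B_k$ of nonempty irreducible algebraic sets over $F$, and by the Hilbert basis theorem each $B_i$ is the common zero set in $F^n$ of a finite tuple $\vec{f}^{(i)}$ of polynomials over $F$; since $B_i \subseteq A^F$ we may insist that $\phi$ itself be one of the entries of $\vec{f}^{(i)}$. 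I will argue that the sets $C_i = \{x \in \R^n : \text{every entry of } \vec{f}^{(i)} \text{ vanishes at } x\}$ are precisely the irreducible components of $A$. This finishes the proof, because over $\R$ the tuple $\vec{f}^{(i)}$ may be replaced by the sum of the squares of its entries, a single polynomial over $F$, so each $C_i$ is algebraic over $F$.

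The crux is a reflection lemma: for every finite tuple $\vec{g}$ of polynomials over $F$, the algebraic set it cuts out in $F^n$ is irreducible (in the topology whose closed sets are the algebraic subsets) if and only if the algebraic set it cuts out in $\R^n$ is irreducible. To see this, recall the standard criterion: an algebraic set $V = V_K(\vec{g})$ over a field $K$ is reducible exactly when there are polynomials $p, q \in K[\bar x]$ with $V \subseteq V_K(pq)$ but $V \not\subseteq V_K(p)$ and $V \not\subseteq V_K(q)$ --- one obtains such $p, q$ as single members of finite generating sets of the two proper closed pieces of a reducible decomposition, using Noetherianity. For a fixed bound on the degrees of $p$ and $q$, each of the three conditions is first-order in the coefficients of $\vec{g}$, since ``a polynomial $r$ vanishes on $V(\vec{g})$'' unwinds to ``$\forall x\,(\bigwedge_j g_j(x) = 0 \to r(x) = 0)$''. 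Since reducibility, if it occurs, is witnessed by $p, q$ of some finite degrees, ``$V_K(\vec{g})$ is reducible'' is a countable disjunction of first-order formulas over $K$ in the coefficients of $\vec{g}$; the lemma then follows from $F \prec \R$.

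Granting the lemma, I transfer the relevant facts about the $B_i$ from $F$ up to $\R$, all of them having parameters in $F$: each $C_i$ is irreducible (the lemma applied to $\vec{f}^{(i)}$); each $C_i$ is nonempty and $C_i \not\subseteq C_j$ for $i \neq j$ (existential statements reflecting nonemptiness and incomparability of the $B_i$); and $A = C_1 \cup \cdots \cup C_k$, the inclusion $\supseteq$ being automatic because $\phi$ appears among the entries of each $\vec{f}^{(i)}$, and $\subseteq$ being the reflection of $A^F = \bigcup_i B_i$. Each $C_i$ is a closed subset of $A$ in the Noetherian algebraic-set topology, so uniqueness of the irreducible decomposition identifies $\{C_i : i < k\}$ as exactly the set of irreducible components of $A$. (If $A$ is reducible then $A \neq \emptyset$, whence $A^F \neq \emptyset$ and $k \geq 1$; the empty case needs no separate discussion.)

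The step I expect to be the main obstacle is the reflection lemma: pinning down the criterion for reducibility and recognizing it as a \emph{countable} union of first-order conditions --- this is where it matters that the witnessing polynomials $p, q$ may be taken of bounded degree --- together with the routine but slightly fussy check that nonemptiness, pairwise incomparability, and the covering relation $A = \bigcup_i C_i$ are each genuinely first-order over parameters in $F$. Everything else is Hilbert's basis theorem, the uniqueness of irreducible decompositions in a Noetherian space, and the elementarity $F \prec \R$ furnished by the previous corollary.
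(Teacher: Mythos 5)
Your proof is correct. It rests on the same foundation as the paper's --- the elementarity $F\prec\R$ coming from quantifier elimination, degree bounds turning everything first-order, and uniqueness of the irredundant decomposition into irreducibles in a Noetherian topology --- but the transfer runs in the opposite direction and through a different key lemma. The paper reflects \emph{downward}: the decomposition of $A$ over $\R$ has some number $m$ of composants defined by polynomials of degree at most $k$, the existence of such a (maximal irredundant) decomposition is a single first-order statement with parameters in $F$, so $F$ supplies defining polynomials, and uniqueness identifies the sets they cut out with the actual composants. You instead decompose $A^F$ over $F$ and transfer \emph{upward}, which forces you to prove that irreducibility itself is preserved; your device for this --- reducibility of $V(\vec g)$ is witnessed by a pair $p, q$ of some bounded degree with $V\subseteq V(pq)$, $V\not\subseteq V(p)$, $V\not\subseteq V(q)$, hence is a countable disjunction of first-order conditions and so transfers both ways by elementarity --- is the genuinely new ingredient relative to the paper. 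What your route buys is precision at exactly the point the paper's two-sentence sketch glosses over: it is not immediate from the paper's formulation why the $F$-definable witnesses it produces define irreducible sets rather than merely an irredundant cover, whereas your irreducibility-transfer lemma settles this explicitly. The cost is a longer argument and the extra bookkeeping (nonemptiness, pairwise incomparability, the covering identity, and the sum-of-squares reduction to a single defining polynomial, the last of which is needed to match the paper's definition of ``algebraic over $F$''), all of which you carry out correctly.
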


\begin{proof}
The decomposition of $A$ is well-known to be unique: among all ways of expressing the set $A$ as a union of finitely many algebraic sets none of which is covered by the others, it is the one in which there is the largest number of sets possible. If the number of composants is $m\in\gw$ and all of them are given by polynomials of degree at most $k$, the existence of such a decomposition is a first order statement in the language of real closed fields, which is then reflected by the field $F$.
\end{proof}

\begin{corollary}
\label{c4}
A finite set $A\subset\mathbb{R}^n$ algebraic over $F$ is a subset of $F^n$.
\end{corollary}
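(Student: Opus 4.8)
The plan is to invoke the preceding Corollary that a real closed subfield $F\subseteq\mathbb{R}$ is an elementary submodel of $\mathbb{R}$, so that the finite zero set of a polynomial with coefficients in $F$ is computed correctly inside $F$. Write $A=\{x\in\mathbb{R}^n:\phi(x)=0\}$ with $\phi$ a polynomial whose coefficients lie in $F$ (this is exactly what ``algebraic over $F$'' provides), and let $m=|A|\in\gw$.

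First I would write down the first-order sentence, with parameters the coefficients of $\phi$ (all in $F$), asserting that there exist pairwise distinct $n$-tuples $\bar x_0,\dots,\bar x_{m-1}$ with $\phi(\bar x_j)=0$ for every $j<m$. This sentence holds in $\mathbb{R}$ precisely because $A$ has exactly $m$ elements. By the Corollary it also holds in $F$, so there are $m$ pairwise distinct tuples in $F^n$ on which $\phi$ vanishes.

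Since $F\subseteq\mathbb{R}$ and $\phi$ has coefficients in $F$, evaluating $\phi$ at a tuple in $F^n$ gives the same value whether the computation is carried out in $F$ or in $\mathbb{R}$; hence those $m$ tuples are $m$ pairwise distinct elements of $A$ lying in $F^n$. As $|A|=m$, this forces $A\subseteq F^n$, which is the claim.

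There is essentially no serious obstacle; the only point deserving a moment's care is that the displayed sentence must mention no real parameters outside $F$, which is automatic because ``$A$ algebraic over $F$'' means $A$ is the zero set of a polynomial with coefficients in $F$. (An alternative, slightly longer route argues coordinatewise: each coordinate projection of $A$ is a finite semi-algebraic set over $F$, hence by quantifier elimination a finite union of roots of nonzero univariate polynomials over $F$, so every coordinate of every point of $A$ is algebraic over $F$ and therefore lies in $F$, using that $F[\sqrt{-1}]$ is algebraically closed and $F[\sqrt{-1}]\cap\mathbb{R}=F$. The elementary-submodel argument is the cleaner one.)
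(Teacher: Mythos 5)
Your argument is correct and is essentially the paper's own: both proofs reduce the claim to the elementarity of $F$ in $\mathbb{R}$ (the preceding corollary) applied to a first-order sentence, with parameters the coefficients of the defining polynomial, witnessing that $A$ has exactly $k=|A|$ elements. The paper phrases the sentence as ``there is a list of $k$ points exhausting the zero set'' while you assert ``there are $k$ pairwise distinct zeros'' and then count externally; this is a cosmetic difference, not a different route.
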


\begin{proof}
Let $k$ be the cardinality of $A$. Existence of a list of $k$ points exhausting the set $A$ is a first-order statement of the language of real closed fields, and therefore reflected by $F$.
\end{proof}

\noindent Finally, there is a proposition about algebraic sets which is needed in one critical spot of the proof of Theorem~\ref{posettheorem}. While it seems to be either folkloric or otherwise well-known, I did not find a good reference for it and I include the proof.

\begin{proposition}
\label{folkproposition}
Let $n\geq 1$ be a number. Let $A\subset\mathbb{R}^n$ be an irreducible algebraic set, and $A\subseteq\bigcup_iB_i$ for some algebraic sets $B_i$ for $i\in\gw$. Then there is an index $i\in\gw$ such that $A\subseteq B_i$.
\end{proposition}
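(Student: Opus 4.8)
The plan is to exploit the Baire category theorem inside $A$ together with the fact, recorded in Corollary~\ref{c1}, that an algebraic set is either finite or uncountable; more precisely, I want to use that a proper algebraic subset of an irreducible algebraic set $A$ is nowhere dense in $A$. Suppose toward a contradiction that $A\subseteq\bigcup_{i\in\gw}B_i$ but $A\not\subseteq B_i$ for every $i$. Replacing $B_i$ by $B_i\cap A$, which is still algebraic (intersections of algebraic sets are algebraic), I may assume each $B_i$ is a proper algebraic subset of $A$. The first step is to observe that each such $B_i$ has empty interior relative to $A$: if $B_i$ contained a relatively open nonempty subset $U$ of $A$, then $A$ would be the union of $B_i$ and the Polish closure (equivalently, the algebraic closure, i.e.\ the smallest algebraic set containing it) of $A\setminus U$, and one checks using irreducibility that this closure is a proper algebraic subset of $A$ — contradicting irreducibility of $A$. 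Here I should be a little careful that $A\setminus U$ is not all of $A$, which holds because $U\neq\emptyset$ and, $A$ being irreducible hence not a finite set, $U$ is uncountable so genuinely removes points.

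The second step is to note that $A$, being a closed subset of $\R^n$, is Polish in the inherited topology, hence a Baire space. But then $A=\bigcup_i B_i$ exhibits the Baire space $A$ as a countable union of relatively closed nowhere dense sets, contradicting the Baire category theorem. This yields the desired $i$ with $A\subseteq B_i$ (in fact with $A=B_i\cap A$, so $A\subseteq B_i$).

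I expect the only real subtlety — the "main obstacle" in an otherwise short argument — to be the first step, namely verifying that a proper algebraic subset $B$ of an irreducible algebraic set $A$ is nowhere dense in $A$. The cleanest route is: if $B$ had nonempty relative interior, pick a point of $A\setminus B$ (possible since $B\subsetneq A$) and a small Euclidean ball around it disjoint from some relatively open $U\subseteq B$; let $A'$ be the smallest algebraic set containing $A\setminus U$. Then $A=B\cup A'$ with $B\subsetneq A$, and I must argue $A'\subsetneq A$. If $A'=A$, then $A\setminus U$ is dense in $A$ while $U$ is a nonempty relatively open subset of $A$ lying inside the algebraic set $B$; but the closure of $A\setminus U$ misses the nonempty relatively open set $U\setminus \overline{A\setminus U}$... rather, more simply, $U$ is disjoint from $A\setminus U$, so $U\subseteq A\setminus\overline{A\setminus U}^{\,\mathrm{Euclidean}}$ forces $\overline{A\setminus U}^{\,\mathrm{Euclidean}}\subsetneq A$; since the algebraic closure of $A\setminus U$ is contained in this Euclidean closure only after taking a further algebraic hull, I instead argue directly that $A$ irreducible and $A=B\cup A'$ with both algebraic forces $A=B$ or $A=A'$, and the former is excluded by hypothesis, so $A=A'$, i.e.\ $A\setminus U$ is Zariski-dense in $A$; but $U$ nonempty relatively Euclidean-open and $U$ uncountable means $U$ cannot be disjoint from a Zariski-dense subset of the irreducible set $A$ unless $U$ itself is empty — a contradiction. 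Once this nowhere-density is in hand, Baire category finishes the proof immediately.
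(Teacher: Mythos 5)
There is a genuine gap, and it is exactly the one the paper flags at the start of its own proof. Your argument rests on the claim that a proper algebraic subset $B$ of an irreducible real algebraic set $A$ is nowhere dense in $A$ in the topology inherited from $\mathbb{R}^n$. That claim is false over $\mathbb{R}$. Concretely, the acnodal cubic $A=\{(x,y)\in\mathbb{R}^2 : y^2 = x^2(x-1)\}$ is irreducible, yet $(0,0)$ is an isolated point of $A$ in the Euclidean topology (for $0<x<1$ there are no real points), so the proper algebraic subset $B=\{(0,0)\}$ is relatively open in $A$ and certainly not nowhere dense. This also breaks the specific step you use to justify the claim: you assert that a nonempty relatively open subset $U$ of an irreducible algebraic set must be uncountable and therefore cannot avoid a Zariski-dense subset. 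Both halves fail: $U=\{(0,0)\}$ above is a countable relatively open set, and in the Whitney umbrella $x^2=zy^2$ in $\mathbb{R}^3$ the negative $z$-axis is an uncountable relatively open subset of the irreducible surface that is disjoint from the Zariski-dense two-dimensional sheet $\{z>0\}$. The parenthetical ``Polish closure (equivalently, the algebraic closure)'' is also inaccurate: the Euclidean closure of a set is contained in, but generally not equal to, its Zariski closure.

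The paper keeps your Baire-category skeleton but applies it to the regular locus $\regg(A)$, the nonempty relatively open set of non-singular points of $A$, which is Polish in the inherited topology. The correct lemma, proved there by an induction on dimension using the coincidence of Zariski and $C^\infty$ tangent spaces at regular points, is that every proper algebraic subset of $A$ is nowhere dense \emph{in $\regg(A)$}. Baire category applied to $\regg(A)$ then produces an $i$ with $B_i\cap A$ not nowhere dense in $\regg(A)$, so by the contrapositive of the lemma $B_i\cap A$ is not a proper algebraic subset of $A$, i.e.\ $A\subseteq B_i$. To repair your argument you would need to make the same move: establish nowhere-density on $\regg(A)$ rather than on $A$, and run the Baire argument there.
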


\begin{proof}
This does not follow from a straightforward application of the Baire category theorem to the closed set $A$, since irreducible algebraic sets in Euclidean spaces may contain for example points isolated in the sense of the Euclidean topology. It is necessary to apply the Baire category theorem to a certain relatively open subset of $A$. Below, the dimension of an irreducible algebraic set is its Krull dimension, and the dimension of an arbitrary algebraic set is the maximal Krull dimension of its irreducible components.

Without loss, assume that $A\neq 0$. Consider the set $\regg(A)\subseteq A$ consisting of all non-singular points of $A$--these are the points where the dimension of the Zariski tangent space to $A$ is exactly equal to $\dim(A)$. It is well-known \cite[Proposition 3.3.14] {bochnak:real} that $\sing(A)=A\setminus\regg(A)$ is an algebraic subset of $A$ of dimension strictly smaller than $\dim(A)$; in particular, $\regg(A)\neq 0$ and $\regg(A)$ is a relatively open subset of $A$.

\begin{claim}
Suppose that $B$ is a proper algebraic subset of $A$. Then $B$ is nowhere dense in the set $\regg(A)$ in the topology inherited from $\mathbb{R}^n$.
\end{claim}

\begin{proof}
Since every algebraic set is a finite union of its irreducible components, it is enough to prove the claim for irreducible $B$. Since algebraic sets are closed, the statement is equivalent to showing that $\regg(A)\setminus B$ is dense in $\regg(A)$. The argument proceeds by induction on $m=\dim(B)$, which is necessarily smaller than $\dim(A)$. The base step $m=1$ is subsumed in the induction step. For the induction step, suppose that the statement is known for some number $m$, $\dim(B)=m+1$, and $O\subset\regg(A)$ is a nonempty relatively open set; I need to produce a point in $O\setminus B$. $\sing(B)$ is an algebraic set of dimension smaller than that of $\dim(B)$, so by the induction hypothesis it is possible to shrink $O$ if necessary to contain no points in $\sing(B)$. Now suppose towards a contradiction that $O\cap B=O\cap A$. This is a relatively open set of non-singular points of both $A$ and $B$. Therefore, at any point in it, its $C^\infty$-tangent space coincides with the Zariski tangent space for both $A$ and $B$, and should therefore have dimension equal to both $\dim(A)$ and $\dim(B)$. Since $\dim(B)<\dim(A)$, this is impossible.
\end{proof}

The proposition now follows by an application of the Baire category theorem to the set $\regg(A)$, which is Polish in the topology inherited from $\mathbb{R}^n$.
\end{proof}

\subsection{Proof of Theorem~\ref{posettheorem}}

To set up notation for the proof, fix the number $n\geq 1$ and write $X=\mathbb{R}^n$. Let $\{\phi_m\colon m\in\gw\}$ be a countable family of polynomials generating the graph $\Gamma$. For brevity of notation, assume that each of the polynomials $\phi_m$ is symmetric, i.e.\  $\phi_m(u, v)=\phi_m(v, u)$. Assume also that the parameters of the polynomials are all integers. For a set $A\subset X$ and a number $m\in\gw$, write $\phi_m(A)=\{x\in X\colon\forall y\in A\ \phi_m(x, y)=0\}$. By the Hilbert basis theorem, there is a finite set $A'\subset A$ such that $\phi_m(A)=\phi_m(A')$; thus, the set $\phi_m(A)$ is algebraic. The set of colors $\colors$ consists of all pairs $\langle O, b\rangle$ where $O$ is an open ball in $X$ with rational center and rational radius and $b\subset\gw$ is finite.

\begin{definition}
\label{posetdefinition}
The coloring poset $P$ consists of all partial $\Gamma$-colorings $p$ whose range is a subset of $\colors$ and

\begin{enumerate}
\item[(A)] for some countable real closed field $\supp(p)\subset\mathbb{R}$, $\dom(p)=\supp(p)^n$;
\item[(B)] for every uncountable irreducible set $A\subset X$ algebraic over $\supp(p)$ there is a finite set $b(p, A)\subset \gw$ such that every color $\langle O, b\rangle\in\colors$ with $O\cap A\neq 0$, $b(p, A)\subset b$, and $\forall m\in b\ O\cap \phi_m(A)=0$ is attained infinitely many times on $A$.
\end{enumerate}

\noindent  The ordering on $P$ is defined by $q\leq p$ if $p\subseteq q$ and for every set $A\subset X$ algebraic over $\supp(p)$, $p''A=q''A$ holds. 
\end{definition}

\noindent The properties of the poset $P$ are verified in a long sequence of propositions.

\begin{proposition}
\label{sigmaproposition}
$\leq$ is a $\gs$-closed preordering.
\end{proposition}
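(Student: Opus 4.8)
The plan is to check, in turn, the three things that make up a $\sigma$-closed preorder: reflexivity, transitivity, and the existence of lower bounds for descending $\gw$-sequences.

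Reflexivity is immediate from the definition. For transitivity, suppose $r\leq q\leq p$. Then $p\subseteq q\subseteq r$ gives $p\subseteq r$, and the point to notice is that $q\leq p$ forces $\dom(p)\subseteq\dom(q)$, i.e.\ $\supp(p)^n\subseteq\supp(q)^n$, hence $\supp(p)\subseteq\supp(q)$; consequently every set algebraic over $\supp(p)$ is algebraic over $\supp(q)$, since its defining polynomial has coefficients in $\supp(p)\subseteq\supp(q)$. So for such a set $A$ I get $p''A=q''A$ from $q\leq p$ and $q''A=r''A$ from $r\leq q$, whence $p''A=r''A$ as required.

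For $\sigma$-closure, given a descending sequence $\langle p_k:k\in\gw\rangle$ I would take the union: put $p=\bigcup_k p_k$ and $F=\bigcup_k\supp(p_k)$. Since the domains $\supp(p_k)^n$ increase with $k$, the union $p$ is a partial $\Gamma$-coloring with range contained in $\colors$ (any $\Gamma$-edge inside $\dom(p)$ already lies inside some $\dom(p_k)$), and $F$ is a countable real closed field, being an increasing union of such (the axioms beyond those of an ordered field are $\forall\exists$ and hence preserved under unions of chains); moreover $\dom(p)=\bigcup_k\supp(p_k)^n=F^n$, so clause (A) of Definition~\ref{posetdefinition} holds with $\supp(p)=F$. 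For clause (B), let $A\subset X$ be an uncountable irreducible set algebraic over $F$; its defining polynomial has only finitely many coefficients, and since the fields $\supp(p_k)$ form a chain, $A$ is already algebraic over some $\supp(p_k)$. Irreducibility and uncountability do not depend on the field of definition, so clause (B) applied to $p_k$ supplies a finite set $b(p_k,A)$, which I would take as $b(p,A)$. The requirement that a color $\langle O,b\rangle$ with $O\cap A\neq\emptyset$, $b(p,A)\subseteq b$, and $O\cap\phi_m(A)=\emptyset$ for $m\in b$ be attained infinitely often on $A$ is then literally the same requirement for $p$ as it was for $p_k$ (it refers only to $A$, the fixed $\phi_m$, and $b(p,A)=b(p_k,A)$), and it holds for $p$ because $p$ extends $p_k$: any such color is attained at infinitely many points of $A\cap\dom(p_k)\subseteq A\cap\dom(p)$. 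Finally, $p$ is a lower bound of the sequence: $p_j\subseteq p$ for all $j$, and if $A$ is algebraic over $\supp(p_j)$ then $p''A=\bigcup_k p_k''A=p_j''A$, using $p_k''A=p_j''A$ for $k\geq j$ (as $p_k\leq p_j$) and $p_k''A\subseteq p_j''A$ for $k<j$.

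The only step with any real content is clause (B) for the union, and its entire substance is the elementary observation that a set algebraic over $F=\bigcup_k\supp(p_k)$ is already algebraic over one of the $\supp(p_k)$, together with the fact that "attained infinitely often on $A$" can only get easier as the coloring is extended; everything else is bookkeeping.
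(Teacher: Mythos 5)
Your proof is correct and takes exactly the route the paper does (the paper's own proof is a single sentence asserting that the union of the descending chain is a lower bound, with transitivity declared ``immediate''). You simply spell out the verification the paper elides: that $\supp(p)\subseteq\supp(q)$ follows from $q\leq p$, which is the small observation transitivity actually hinges on; that the union of a countable increasing chain of countable real closed subfields of $\mathbb{R}$ is again such; that a set algebraic over $F=\bigcup_k\supp(p_k)$ is already algebraic over some $\supp(p_k)$ because its defining polynomial has only finitely many coefficients; and that condition (B) and the ordering relation are preserved under unions because ``attained infinitely often on $A$'' is monotone under extension and $p_k''A=p_j''A$ stabilizes along the chain. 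All of these are the right checks and carried out correctly.
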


\begin{proof}
The transitivity is immediate. For the $\gs$-closure, if $p_i\colon i\in\gw\rangle$ is a descending chain of conditions, $\bigcup_ip_i$ is the common lower bound.
\end{proof}

\noindent The most important part of the proof is a precise and generous characterization of compatibility of conditions in the poset $P_n$.

\begin{proposition}
\label{compatibilityproposition}
Let $a\subset P_n$ be a finite set of conditions. The following are equivalent:

\begin{enumerate}
\item $a$ has a common lower bound;
\item for every $x\in X$, $a$ has a common lower bound containing $x$ in its domain;
\item $q=\bigcup a$ is a function and a $\Gamma$-coloring, and for every $p\in a$ and every set $A\subset X$ algebraic over $\supp(p)$, $p''A=q''A$.
\end{enumerate}
\end{proposition}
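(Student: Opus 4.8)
The plan is to prove the cycle $(1)\Rightarrow(2)\Rightarrow(3)\Rightarrow(1)$, with the bulk of the work in the last implication. The implication $(1)\Rightarrow(2)$ is nearly trivial: if $q$ is a common lower bound of $a$ and $x\notin\dom(q)$, one extends $\supp(q)$ to a countable real closed field $F$ containing $x$ and all coordinates of $x$, and then extends $q$ to a condition with domain $F^n$; the point is that adding new colors on the new part of the domain $F^n\setminus\dom(q)$ can always be arranged to respect clause (B) for the conditions in $a$, because the ordering only constrains values on sets algebraic over the smaller supports $\supp(p)$, $p\in a$, and those sets are unaffected. The implication $(2)\Rightarrow(3)$ is immediate from the definitions: a common lower bound $q$ witnesses that $\bigcup a\subseteq q$ is a function and a $\Gamma$-coloring (these are downward-absolute properties of restrictions of $q$), and the condition $p''A=q''A$ for $p\in a$ and $A$ algebraic over $\supp(p)$ is part of what $q\le p$ means, together with $q\le p$ giving $p''A=q_a''A$ where $q_a=\bigcup a$ since $\dom(q_a)\supseteq\dom(p)$ and on that part $q$ agrees with $q_a$.

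The heart of the matter is $(3)\Rightarrow(1)$. Assume $q_0=\bigcup a$ is a $\Gamma$-coloring with $p''A=q_0''A$ for all $p\in a$ and all $A$ algebraic over $\supp(p)$. First I would pass to a countable real closed field $F\supseteq\bigcup_{p\in a}\supp(p)$ and plan to build a condition $q$ with $\dom(q)=F^n$ extending $q_0$. On the ``old'' domain $\bigcup_{p\in a}\dom(p)$ we keep the values of $q_0$; we must color $F^n\setminus\dom(q_0)$ so that (i) the result is still a $\Gamma$-coloring, and (ii) clause (B) holds for $q$ relative to $\supp(q)=F$. For (i), note that two points of $X$ connected in $\Gamma$ lie on a proper algebraic set relating them, and we must avoid giving them the same color; since each color class is required to sit inside a single rational open ball $O$, and points at $\Gamma$-distance are an algebraic constraint, a bookkeeping argument—coloring $F^n\setminus\dom(q_0)$ in $\gw$ steps, at each step choosing for the next point a color $\langle O,b\rangle$ whose ball $O$ is small enough to separate it from all previously-colored $\Gamma$-neighbors, and which is legal for every relevant algebraic set through that point—suffices. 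For (ii), for each uncountable irreducible $A$ algebraic over $F$, I would let $b(q,A)$ be determined as follows: if $A$ happens to be algebraic over some $\supp(p)$ already, the colors forced on $A$ by $q_0$ are constrained by Proposition~\ref{folkproposition} (an irreducible $A$ covered by the countably many algebraic ``bad sets'' $\phi_m(A)$ and the complements of balls must actually be covered by finitely many, on the regular locus $\regg(A)$), so finitely much data $b(q,A)$ already captures the obstruction; if $A$ is genuinely new, we have total freedom on $A\cap(F^n\setminus\dom(q_0))$, which is a relatively dense (in fact co-countable inside $A\cap F^n$, using Corollary~\ref{c1} and that $F$ is real closed so $A\cap F^n$ is ``large'' in $A$) set of points, and we can realize each required color infinitely often by the same step-by-step bookkeeping. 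Finally one checks $q\le p$ for each $p\in a$: $\dom(p)\subseteq\dom(q)$, $q$ extends $p$, and for $A$ algebraic over $\supp(p)$ we have $q''A=q_0''A=p''A$ by hypothesis (3), since $\dom(p)$ already meets every point of $A\cap F^n$ that $A\cap\dom(q_0)$ did—here one uses that an algebraic set over $\supp(p)$ has all its $\supp(p)$-rational points, and $q_0$ on $\dom(q_0)$ already saw all of $A\cap\supp(p)^n$... more precisely $p''A=q_0''A$ is exactly the assumption, so nothing new is needed.

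The main obstacle I anticipate is the simultaneous satisfaction of clause (B) for \emph{all} uncountable irreducible $A$ algebraic over $F$ while coloring $F^n\setminus\dom(q_0)$: there are uncountably many such $A$, each imposing infinitely-many-times requirements on overlapping pieces of the domain, and one must organize the construction (presumably by a priority/bookkeeping argument over $\gw$, using that $\dom(q)$ is only countable and that the legal colors for a given point lie in a computable-from-the-local-data family) so that no two requirements conflict—conflicts are precisely ruled out by the finiteness coming from Hilbert's basis theorem and Proposition~\ref{folkproposition}, which is why those facts are isolated in the preliminaries. A secondary subtlety is checking that the new $b(q,A)$ for $A$ algebraic over some smaller support is compatible with the old $b(p,A)$: this should follow because enlarging the support only \emph{adds} algebraic sets and the ordering clause $p''A=q''A$ pins down the colors on the old sets exactly, so $b(q,A)\supseteq b(p,A)$ can be arranged with the extra indices accounting for the new $\phi_m(A)$ constraints that became relevant.
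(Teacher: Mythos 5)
Your overall architecture (bookkeeping construction over $\gw$, appeal to Proposition~\ref{folkproposition} and the Hilbert basis theorem) matches the paper's, but there is a genuine gap at the single most important point, and you explicitly wave it away. When you build $q$ with $\dom(q)=F^n$ for a real closed field $F$ strictly larger than the $\supp(p)$'s, a set $A$ algebraic over $\supp(p)$ acquires many \emph{new} points in $A\cap(F^n\setminus\dom(q_0))$, and each of these gets a color under $q$ that it did not have under $q_0$. Hence $q''A\supseteq q_0''A$ with no reason for equality; your claim that ``$q''A=q_0''A=p''A$ by hypothesis (3) \dots\ so nothing new is needed'' is false. Arranging $q''A=p''A$ is precisely the driving constraint of the construction: every new point $y$ must receive a color that $p$ already attains on the inclusion-smallest set $B(p,y)$ algebraic over $\supp(p)$ containing $y$ (and then automatically on every $\supp(p)$-algebraic $A\ni y$, since $B(p,y)\subseteq A$). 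That such colors exist in abundance is exactly what clause~(B) of Definition~\ref{posetdefinition} for the old conditions guarantees, after one checks (via Corollaries~\ref{c1}--\ref{c4} and the no-perfect-clique hypothesis) that the finitely many forbidden sets $B(p,y)\cap\phi_m(B(p,y))$ are finite subsets of $\dom(p)$ and so miss $y$. This same mechanism, not your ``ball small enough to separate $y$ from previously colored $\Gamma$-neighbors,'' is what keeps $q$ a $\Gamma$-coloring against the uncountably many old points: if $y\mathrel\Gamma y_0$ with $y_0\in\dom(p)$ via $\phi_m$, then all of $B(p,y)$ is $\phi_m$-related to $y_0$, and since $y$'s color recurs on $B(p,y)\cap\dom(p)$, equality of colors would contradict $p$ being a coloring. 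Shrinking balls cannot work, since the colored $\Gamma$-neighbors of $y$ inside $\dom(q_0)$ may be infinite and accumulate at $y$.

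Two further points. First, your implication $(1)\Rightarrow(2)$ is not ``nearly trivial'': extending a single condition to catch a new point $x$ already requires the entire machinery above (the paper derives totality of the generic coloring \emph{from} this proposition, not the other way around); the clean route is the paper's $(2)\Rightarrow(1)\Rightarrow(3)\Rightarrow(2)$, folding $x$ into the field $F$ from the start. Second, your sketch does not address the coherence between the infinitely-often requirements on different new irreducible sets $A_0,A_1$ algebraic over $F$ (or between such a set and an individual new point): if every point of $A_0$ is $\phi_m$-related to every point of $A_1$, the colors realized on the two sets must be kept apart. The paper handles this by committing, in each finite approximation, a finite set $g(A)\subset\gw$ of indices per set, with compatibility clauses ensuring that a color legal on $A_0$ (i.e., with $O\cap\phi_m(A_0)=0$ for the committed $m$) is automatically illegal on $A_1\subseteq\phi_m(A_0)$. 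Finally, Proposition~\ref{folkproposition} only produces a point of $A$ off the countably many bad algebraic sets; to place it into the domain you need it in $A\cap F^n$, which the paper gets by taking $F=M\cap\mathbb{R}$ for a countable elementary submodel $M$ and reflecting the existence statement. None of these devices appears in your proposal, and without them the ``priority/bookkeeping'' step has no reason to succeed.
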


\begin{proof}
Clearly, (2) implies (1), which implies (3) by the definition of the ordering on the poset $P_n$. To show that (3) implies (2), assume that (3) holds and $x\in X$ is an arbitrary point. Let $M$ be a countable elementary submodel of a large structure containing the set $a$ and the point $x$, and let $F=M\cap\mathbb{R}$. I will construct a lower bound $r$ of $a$ such that $\supp(r)=F$.

The construction of $r$ is a demanding counting argument; it is necessary to set some notation. Let $d=F^n\setminus\bigcup_{p\in a}\dom(p)$. For every point $y\in d$ and every condition $p\in a$ write $B(p, y)\subset X$ for the inclusion-smallest set algebraic over $F$ containing $y$ as an element. Similarly, for every irreducible set $A\subset X$ algebraic over $F$ and every condition $p\in a$ write $B(p, A)$ for the smallest set which is algebraic over $\supp(p)$ containing $A$ as a subset. Note that the set $B(p, A)$ actually exists, since there are no infinite sequences of algebraic sets strictly decreasing under inclusion by the Hilbert basis theorem.  In addition, the set $B(p, A)$ is irreducible: if not, its irreducible composants would be algebraic over $\supp(p)$ by Corollary~\ref{c3}, one of these composants would have to cover $A$ by the irreducibility of $A$, and the minimal choice of $B(p, A)$ would be violated.

The condition $r$ is constructed by finite approximations of the following kind. Call a pair $\langle f, g\rangle$ an \emph{approximation} if 

\begin{itemize}
\item[(i)] $f\colon d\to\colors$ is a finite partial $\Gamma$ coloring and for each $y\in\dom(r)$ and every $p\in a$, the color $r(y)$ is attained infinitely many times in $p$ on $B(p, y)$;
\item[(ii)] $g$ is a function whose domain consists of finitely many uncountable irreducible subsets of $X$ algebraic over $F$, and for each $A\in\dom(g)$, $g(A)\subset\gw$ is a finite superset of $\bigcup_{p\in a}b(p, B(p, A))$;
\item[(iii)] if $y\in\dom(f)$ and $A\in\dom(g)$ are a point and a set such that for some number $m\in\gw$ $\forall z\in A\ \phi_m(y, z)=0$ holds, then writing $f(y)=\langle O, c\rangle$ some such number $m$ belongs to $c\cup g(A)$ and either $g(A)\not\subseteq c$ or $O$ contains some point $y'$ such that $\forall z\in A\ \phi_m(y' z)$;
\item[(iv)] if $A_0, A_1\in\dom(g)$ are sets such that for some $m\in\gw$, $\forall y_0\in A_0\ \forall y_1\in A_1\ \phi_m(y_0, y_1)=0$, then some such number $m$ belongs to $g(A_0)\cup g(A_1)$.
\end{itemize}

\noindent Approximations are ordered by coordinatewise reverse extension. It is necessary to show that approximations can be suitably extended; this is the purpose of the following claims.

\begin{claim}
Let $\langle f, g\rangle$ be an approximation and $y\in d$ be a point. Then there is a color $\langle O, c\rangle\in\colors$ such that $\langle f\cup \{\langle y, \langle O, c\rangle\rangle\}, g\rangle$ is an approximation.
\end{claim}

\begin{proof}
Without loss, assume that $y\notin\dom(f)$. Note that $y$ is an accumulation point of every set $B(p, y)$ for $p\in a$ as $y\notin\supp(p)^n$. Find a finite set $c\subset\gw$ which includes $\bigcup_{p\in a}b(p, B(p, x_i))$, and for every set $A\in\dom(g)$, if there is $m$ such that $\forall z\in A\ \phi_m(y,z)=0$ then some such $m$ is in $c$. Now, for every $m\in\gw$ and every $p\in a$, consider the set $C(p, m)=B(p, y)\cap \phi_m(B(p, y))$. The set $C(p, m)$ is algebraic over $\supp(p)$ by Corollary~\ref{c2}. It is a $\Gamma$-anticlique, therefore it is not uncountable by the assumptions on $\Gamma$, therefore it must be finite by Corollary~\ref{c1},  therefore it is a subset of $\dom(p)$ by Corollary~\ref{c4}, therefore it does not contain the point $y$. Find a basic open neighborhood $O$ of $y$ which is disjoint from the sets $C(p, m)$ for $m\in c$ and $p\in a$, and such that $O$ is not used in range of $f$. The color $\langle O, c\rangle$ is then as required.
\end{proof}

\begin{claim}
Let $\langle f, g\rangle$ be an approximation and $A\subset X$ an uncountable irreducible set algebraic over $F$. Then there is a finite set $b\subset\gw$ such that $\langle f, g\cup \{\langle A, b\rangle\}\rangle$ is an approximation.
\end{claim}

\begin{proof}
Without loss, assume that $A\notin\dom(g)$. Just let $b$ be any superset of all the sets $b(p, B(p, A))$ which in addition satisfies the following conditions. If $A'\in\dom(g)$ is such that for some $m\in\gw$, $\forall y\in A'\ \forall z\in A\ \phi_m(y, z)=0$, then some such number $m$ belongs to $b$.  If $y\in\dom(f)$ such that for some number $m\in\gw$ $\forall z\in A\ \phi_m(y, z)=0$ holds, then writing $f(y)=\langle O, c\rangle$ some such number $m$ belongs to $c\cup g(A)$ and $b\not\subseteq c$. The set $b$ is then as required.
\end{proof}

\begin{claim}
Let $\langle f, g\rangle$ be an approximation, let $A\in\dom(g)$, and let $\langle O, c\rangle$ be a color such that $g(A)\subseteq c$, $O\cap A\neq 0$, and $\forall m\in c\ O\cap\phi_m(A)=0$. Then there is a point $z\in A\setminus\dom(f)$ such that $\langle f\cup\{\langle z, \langle O, c\rangle\rangle\}, g\rangle$ is an approximation.
\end{claim}

\begin{proof}
Consider the countable collection of algebraic sets including the following:

\begin{itemize}
\item[(a)] singletons in $\dom(q)\cup\dom(f)$;
\item[(b)] sets algebraic in $\supp(p)$ for $p\in a$;  
\item[(c)] for every number $m\in\gw$ and for every point $y\in\dom(f)$, the set $\{w\in X\colon \phi_m(z, w)=0\}$;
\item[(d)] for every number $m\in\gw$ and every set $A'\in\dom(g)$, the set $\phi_m(A')$.
\end{itemize}

\noindent Use Proposition~\ref{folkproposition} to find a point $z\in A$ which belongs only to those algebraic sets in this collection which are supersets of $A$. By elementarity of the model $M$, such a point $z$ can be found in $A\cap M$. Now, work to show that the point $z$ is as required. Validity of items (ii) and (iv) in the definition of an approximation remains untouched as they do not refer to any points in the domain of the first coordinate.

For item (i), argue that  $f\cup\{\langle z, \langle O, c\rangle\rangle\}$ is a $\Gamma$-coloring. Suppose that $y\in\dom(f)$ is a point such that $y\Gamma z$ holds, and find a number $m\in\gw$ such that $\phi_m(y, z)$ holds. Write $f(y)=\langle O', c'\rangle$. By the choice of the point $z$, it must be the case that $\forall w\in A\ \phi_m(y, w)=0$ holds. By item (iii) of approximation $\langle f, g\rangle$, some such a number $m$ belongs to $c'\cup g(a)\subseteq c'\cup c$ and either $g(A)\not\subseteq c'$ or there is a point $y'\in O'$ such that $\forall w\in A\ \phi_m(y', w)=0$ holds. In the former case, $c\neq c'$ and the colors of $y, z$ are distinct. In the latter case, either $c\neq c'$ and the colors are distinct, or $c=c'$ holds. In the latter case, $m\in c$ holds and $O$ does not contain the point $y'\in\phi_m(A)$, which means that $O\neq O'$ and the colors are distinct again.

To verify that $\langle O, c\rangle$ belongs to every set $p''B(p, z)$ for $p\in a$, note that $B(p, z)=B(p, A)$ holds by item (b) of the choice of the point $z$. It follows that $B(p, z)\subseteq g(A)\subseteq c$ holds by item (ii) applied to $\langle f, g\rangle$ and for every $m\in b(B(p, z))$ $O\cap\phi_m(A)=0$. Since $A\subseteq B(p, A)$, $\phi_m(B(p, A)\subseteq \phi_m(A)$ holds and $O\cap \phi_m(B(p, z))=0$. By item (B) of Definition~\ref{posetdefinition}, $\langle O,c\rangle\in p''B(p, z)$ as required.

For item (iii), suppose that $A'\in\dom(g)$ is a set such that for some number $m\in\gw$ $\forall w\in A'\ \phi_m(z, w)=0$ holds. Item (d) of the choice of the point $z$ then shows that $A\subset\phi_m(A')$. Some such number $m$ must belong to $g(A)\cup g(A')$ by (iv) of the approximation $\langle f, g\rangle$, and it follows that it belongs to $c\cup g(A')$. Since $O\cap A\neq 0$, there is a point $z'\in O$ such that $\forall w\in A'\ \phi_m(z, w)=0$ holds as required.
\end{proof}

With the three extension claims in hand, a simple bookkeeping argument produces a descending sequence $\langle f_i, g_i\colon i\in\gw\rangle$ such that, writing $f=\bigcup_i f_i$ and $g=\bigcup_ig_i$, $\dom(f)=d$, $\dom(g)$ contains all uncountable irreducible sets algebraic in $F$, and for every set $A\in\dom(g)$ every color $\langle O, c\rangle$ such that $g(A)\subseteq c$ and $O\cap A\neq 0$ and $O\cap \phi_m(A)=0$ for all $m\in c$, is attained infinitely many times in $f$ on $A$.

In the end of the recursion, let $r=q\cup f$ and argue that $r\in P$ is a common lower bound of the conditions in the set $a$. The argument is the conjunction of the following three claims.

\begin{claim}
$r$ is a condition in $P$.
\end{claim}

\begin{proof} 
To see Definition~\ref{posetdefinition}(A) , suppose that $y_0, y_1$ are $\Gamma$-connected points in $\dom(r)$. If neither of them belongs to the set $d$, then they receive distinct colors since $q$ is a $\Gamma$-coloring. If both of them belong to the set $d$, then they receive distinct colors since $f$ is a $\Gamma$-coloring. Finally, if $y_0\in\dom(p)$ for some $p\in a$ and $y_1\in d$, then for some number $m\in\gw$, $\phi_m(y_0, y_1)=0$. Then $B(p, y_1)$ is a subset of the set $\{y\in X\colon \phi_m(y_0, y)=0\}$ and by item (i) of the recursion hypothesis, there is $y'_1\in\dom(p)\cap B(p, y_1)$ distinct from $y_0$ such that $r(y_1)=p(y'_1)$. Since $p$ is a $\Gamma$-coloring and $y_0\mathrel\Gamma y'_1$, this means that $y_0, y_1$ receive distinct colors in this configuration as well.

To see Definition~\ref{posetdefinition}(B), suppose that $A\subset X$ is an irreducible set algebraic over $F$. It is clear from the bookkeeping demands that the set $g(A)$ witnesses  Definition~\ref{posetdefinition}(B) for $r, A$. 
\end{proof}

\begin{claim}
For every condition $p\in a$, $r\leq p$ holds.
\end{claim}

\begin{proof}
Let $A\subset X$ be a set algebraic over $\supp(p)$; I must show that $r''A=p''A$. This, however, follows immediately from item (i) of the definition of approximation and the assumption (3).
\end{proof}

\noindent The proof of the proposition is complete.
\end{proof}

\begin{corollary}
The poset $P$ is nonempty and Suslin.
\end{corollary}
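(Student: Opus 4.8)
The plan is to get nonemptiness from a degenerate case of the compatibility analysis already in hand, and then to realize $P$ as a Borel set of codes inside a Polish space, checking the three clauses in the definition of a Suslin forcing by a complexity count whose only delicate ingredient is clause (B) of Definition~\ref{posetdefinition}. For nonemptiness, apply Proposition~\ref{compatibilityproposition} to the empty set $a=\emptyset$ of conditions: clause (3) holds vacuously, since the empty function is trivially a function and a $\Gamma$-coloring and the conjunction over $p\in a$ is empty, so clause (1) holds, and a common lower bound of $\emptyset$ is simply any element of $P$; hence $P\neq\emptyset$. (Equivalently, the approximation construction in the proof of Proposition~\ref{compatibilityproposition}, run with $a=\emptyset$ and any countable elementary submodel $M$, outputs a condition with support $M\cap\R$ and domain $(M\cap\R)^n$.)

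For Suslin-ness, fix a bijection $k\mapsto\langle k_0,\dots,k_{n-1}\rangle$ of $\gw$ with $\gw^n$ and set $Z=\R^\gw\times\colors^\gw$, which is Polish because $\colors$ is a countable set. A point $z=\langle\bar r,\bar c\rangle\in Z$ codes the partial coloring $p_z$ with domain $(\rng\bar r)^n$ and value $\bar c(k)$ at $\langle\bar r(k_0),\dots,\bar r(k_{n-1})\rangle$, subject to the arithmetic well-definedness requirement that equal domain points get equal colors. Identify $P$ with the set of $z\in Z$ for which $p_z$ is a well-defined partial $\Gamma$-coloring satisfying (A) and (B); coding redundancy does no harm. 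This set is Borel: ``$\rng\bar r$ is a real closed field'' unwinds to closure under the field operations together with the odd-degree root axioms, both arithmetic in $z$; ``$p_z$ is a $\Gamma$-coloring'' says distinct equally-colored domain points satisfy $\phi_m(\cdot,\cdot)\neq 0$ for all $m$, again arithmetic; and clause (B) quantifies universally over polynomials with coefficients in $\rng\bar r$, equivalently over finite index tuples into $\bar r$ (a countable quantifier), asserting for each such polynomial with uncountable irreducible zero set $A$ the existence of a finite $b^*\subseteq\gw$ (countable quantifier) such that every color $\langle O,b\rangle\in\colors$ (countable quantifier) with $O\cap A\neq\emptyset$, $b^*\subseteq b$, and $O\cap\phi_m(A)=\emptyset$ for all $m\in b$ is attained infinitely often by $p_z$ on $A$. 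Every quantifier here is countable except the geometric predicates ``$A$ uncountable and irreducible'', ``$O\cap A\neq\emptyset$'', and ``$O\cap\phi_m(A)=\emptyset$''; for a fixed index tuple and fixed $O,m$ each of these reduces, via Corollary~\ref{c1}, Corollary~\ref{c3}, and quantifier elimination for real closed fields applied to the $\exists x$ and $\forall y$ in $O\cap A\neq\emptyset$ and in $\phi_m(A)=\{x:\forall y\,(\psi(y)=0\to\phi_m(x,y)=0)\}$, to a semialgebraic, hence Borel, condition on the finitely many entries of $\bar r$ that occur. A countable combination of Borel sets being Borel, $P$ is Borel, a fortiori analytic.

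The relation $q\leq p$, transcribed to codes $\langle\bar s,\bar d\rangle\leq\langle\bar r,\bar c\rangle$, says that $\rng\bar r\subseteq\rng\bar s$, that $p_{z_q}$ extends $p_{z_p}$, and that for every polynomial over $\rng\bar r$ every color $p_{z_q}$ attains on its zero set is already attained there by $p_{z_p}$; this is an arithmetic matrix under countable quantifiers (over the index tuple coding the polynomial and over the two relevant domain points), so $\leq$ is Borel, it is transitive by Proposition~\ref{sigmaproposition}, and it contains the diagonal since $p\leq p$. Finally, by the equivalence of clauses (1) and (3) of Proposition~\ref{compatibilityproposition}, two codes $z_0,z_1\in P$ are compatible if and only if $p_{z_0}\cup p_{z_1}$ is a function and a $\Gamma$-coloring and, for each $i<2$ and each polynomial over $\supp(p_{z_i})$, the set of colors $p_{z_i}$ attains on the corresponding zero set is not enlarged by passing to the union; this is again a condition of exactly the shape just analyzed, hence Borel, so the incompatibility relation is its complement within $P\times P$ and is Borel too. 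Thus $\langle P,\leq\rangle$ satisfies all the requirements of a Suslin forcing.

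The step needing genuine care is the complexity bookkeeping for clause (B): one must confirm that infiniteness and irreducibility of a zero set, its nonempty intersection with a rational ball, and the behaviour of the operators $A\mapsto\phi_m(A)$ all depend in a Borel way on a code of the defining polynomial, which is exactly where quantifier elimination for real closed fields together with Corollaries~\ref{c1} and~\ref{c3} does the work. The use of Proposition~\ref{compatibilityproposition} is equally essential, since it is what turns the a priori merely $\Sigma^1_1$ compatibility relation into a Borel one, so that the incompatibility relation comes out analytic as the definition demands.
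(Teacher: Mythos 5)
Your proposal is correct and follows the same route as the paper: nonemptiness via Proposition~\ref{compatibilityproposition} applied to the empty set of conditions, Borelness of $P$ and $\leq$ by direct complexity counting, and Borelness (hence analyticity) of incompatibility via the equivalence of clauses (1) and (3) of that proposition. The paper simply declares the Borelness of $P$ and $\leq$ to be clear, whereas you supply the coding and the quantifier-elimination bookkeeping explicitly; the substance is identical.
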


\begin{proof}
It is clear that the $P$ and the ordering on it are both Borel in a suitable space. Proposition~\ref{compatibilityproposition} shows that the incompatibility relation is Borel as well. Nonemptiness follows from the proposition applied to an empty set of conditions.
\end{proof}

\begin{corollary}
For every number $m\in\gw$, the poset $P$ is $m, 2$-centered.
\end{corollary}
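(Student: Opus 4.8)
The plan is to read this off from the compatibility characterization in Proposition~\ref{compatibilityproposition}. The key point I would isolate is that clause (3) there is ``$2$-local'': a finite set $a\subset P$ has a common lower bound if and only if every two-element subset of $a$ does. Granting this, the $m,2$-centeredness of $P$ for every $m\in\gw$ is immediate (indeed $P$ itself, or any countable decomposition of it, witnesses the property), so the whole content is the transfer from pairwise compatibility to compatibility of an arbitrary finite set.

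To carry this out, I would fix a finite set $a\subset P$ all of whose two-element subsets have a common lower bound, put $q=\bigcup a$, and verify clause (3) of Proposition~\ref{compatibilityproposition} for $a$, namely that $q$ is a function and a $\Gamma$-coloring and that $p''A=q''A$ for every $p\in a$ and every set $A\subset X$ algebraic over $\supp(p)$. First, for any $p_0,p_1\in a$ applying Proposition~\ref{compatibilityproposition} to the pair $\{p_0,p_1\}$ shows $p_0\cup p_1$ is a function and a $\Gamma$-coloring; letting $p_0,p_1$ range over $a$ this gives that $q$ is a function, and then, since $q$ is a function and so $q(y)=p(y)$ for every $p\in a$ with $y\in\dom(p)$, that $q$ is a $\Gamma$-coloring. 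Second, given $p\in a$ and $A$ algebraic over $\supp(p)$, the fact that $q$ is a function yields $q''A=\bigcup_{p_0\in a}p_0''A$, so it suffices to check $p_0''A\subseteq p''A$ for every $p_0\in a$; this is trivial for $p_0=p$, and for $p_0\neq p$ it follows by applying Proposition~\ref{compatibilityproposition} to $\{p,p_0\}$, whose compatibility gives $p''A=(p\cup p_0)''A\supseteq p_0''A$. This establishes clause (3), so $a$ has a common lower bound.

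I do not anticipate a real obstacle: the substantive work is all in Proposition~\ref{compatibilityproposition}, and what remains is bookkeeping. The one thing to be careful about is the order of the verifications above---``$q$ is a function'' has to be secured first, since both the $\Gamma$-coloring clause and the image-preservation clause for $A$ are computed by evaluating $q$ pointwise through the conditions of $a$, which is legitimate only once $q$ is known to be single-valued.
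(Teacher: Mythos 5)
Your proposal is correct and follows exactly the route the paper intends: the paper dismisses this as ``immediate from Proposition~\ref{compatibilityproposition},'' and what you have done is spell out the reduction of clause (3) for the whole finite set $a$ to clause (3) for its two-element subsets (first securing that $\bigcup a$ is a function, then the coloring property, then $p''A=q''A$ via $q''A=\bigcup_{p_0\in a}p_0''A$ and $p_0''A\subseteq(p\cup p_0)''A=p''A$). The details you supply are accurate and are precisely the bookkeeping the paper leaves implicit.
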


\noindent This is to say that if $a$ is a collection of cardinality $m$ consisting of pairwise compatible conditions, then it has a common lower bound. This is immediate from Proposition~\ref{compatibilityproposition}.

\begin{corollary}
\label{coloringcorollary}
The poset $P$ forces the union of its generic filter to be a total $\Gamma$-coloring.
\end{corollary}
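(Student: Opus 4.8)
The plan is to verify directly that $c=\bigcup G$, for $G\subset P$ generic, is a function, is total on $X=\mathbb{R}^n$, and is a $\Gamma$-coloring with countable range; each of the three points reduces either to the definition of the ordering on $P$ or to Proposition~\ref{compatibilityproposition} combined with a density argument.

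For the function and range properties I would note that any two conditions $p_0,p_1\in G$ have a common lower bound $r\in G$, and since $q\leq p$ entails $p\subseteq q$, we get $p_0,p_1\subseteq r$; as $r$ is a partial function, $p_0$ and $p_1$ agree on the intersection of their domains. Hence $c$ is a function, and since by Definition~\ref{posetdefinition} every condition of $P$ has range contained in $\colors$, so does $c$, and $\colors$ is countable.

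For totality I would first invoke Proposition~\ref{sigmaproposition}: $P$ is $\gs$-closed, hence adds no new reals, so $X$ is computed the same in $V$ and $V[G]$. Then for each $x\in X$ the set $D_x=\{p\in P\colon x\in\dom(p)\}$ is dense: given any $p\in P$, apply Proposition~\ref{compatibilityproposition} to the singleton $a=\{p\}$, which trivially satisfies clause (1) since $p$ is its own common lower bound; clause (2) then yields a condition $q\leq p$ with $x\in\dom(q)$, i.e.\ $q\in D_x$. By genericity $G\cap D_x\neq\emptyset$, so $x\in\dom(c)$; as $x$ was arbitrary, $\dom(c)=X$. For the $\Gamma$-coloring property, suppose $x,y\in X$ are $\Gamma$-connected; by totality there are $p_0,p_1\in G$ with $x\in\dom(p_0)$ and $y\in\dom(p_1)$, and a common lower bound $r\in G$ of $p_0,p_1$ is then a partial $\Gamma$-coloring with $x,y\in\dom(r)$ and $r\subseteq c$, whence $c(x)=r(x)\neq r(y)=c(y)$.

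I do not expect a genuine obstacle: the content is bookkeeping around Proposition~\ref{compatibilityproposition} and the definitions. The only step meriting attention is totality, where one must observe that $\gs$-closure leaves $X$ unchanged and that Proposition~\ref{compatibilityproposition} applies legitimately to a singleton set of conditions to supply the needed density.
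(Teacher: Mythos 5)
Your proposal is correct and follows the paper's own argument: the paper likewise reduces totality to the density of $\{q\colon x\in\dom(q)\}$, obtained by applying Proposition~\ref{compatibilityproposition} to the singleton $\{p\}$. The additional checks you spell out (that $\bigcup G$ is a function, has range in the countable set $\colors$, and is a $\Gamma$-coloring via a common lower bound in $G$) are the routine parts the paper leaves implicit.
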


\noindent By a genericity argument, this is to say that for each point $x\in X$ and each condition $p\in P_n$, there is $q\leq p$ such that $x\in\dom(q)$. This is immediate from Proposition~\ref{compatibilityproposition} applied to the singleton $\{p\}$.

\begin{corollary}
\label{balancecorollary}
Under the Continuum Hypothesis, the poset $P$ is $n+1$-Noetherian balanced.
\end{corollary}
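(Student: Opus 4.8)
The plan is to assume the Continuum Hypothesis and, given a condition $p\in P$, build an $(n+1)$-Noetherian balanced pair $\langle Q,\gs\rangle$ with $Q\Vdash\gs\leq\check p$. Since $P$ is $\gs$-closed and the relevant colors $\colors$ form a fixed countable set, the natural choice is to let $Q$ be the trivial (or some $\gs$-closed, CH-sized) forcing and let $\gs$ name a single \emph{master condition} $p^*\leq p$ whose support is a real closed field of size $\aone$ containing all the reals of the ground model, and which has been ``saturated'' so that for every uncountable irreducible $A\subset X$ algebraic over $\supp(p^*)$ the witnessing set $b(p^*,A)$ is as small as possible and every admissible color is attained infinitely often on $A\cap\supp(p^*)^n$. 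Under CH such a condition can be constructed by a transfinite recursion of length $\aone$, repeatedly invoking the extension claims inside the proof of Proposition~\ref{compatibilityproposition} (suitably adapted from countable approximations to approximations of size $<\aone$); this is where CH is used, to enumerate in order type $\aone$ all the uncountable irreducible algebraic sets over the final field and all the admissible colors.

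The heart of the matter is then the balance computation: given mutually $(n+1)$-Noetherian extensions $V[H_0], V[H_1]$ of the intermediate model, with $p_i\leq\gs/G_i$ conditions of $P$ in $V[H_i]$ (here $G_i$ is trivial), I must show $p_0$ and $p_1$ are compatible. By Proposition~\ref{compatibilityproposition} it suffices to check that $q=p_0\cup p_1$ is a function, is a $\Gamma$-coloring, and that $p_i''A=q''A$ for every $A$ algebraic over $\supp(p_i)$. The key structural input is that $\supp(p^*)$ is a real closed field containing $\R^V$, so every algebraic set over $\supp(p_i)$ contained in the ``shared'' part is already algebraic over the ground model, and by $(n+1)$-mutual Noetherianity the smallest closed (here: algebraic, Krull dimension $\leq n$) set over $V[H_{1-i}]$ containing it equals the one over $V$. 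That is exactly the mechanism that forces the two colorings to agree on the overlap of their domains and on all the relevant algebraic sections.

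The main obstacle I anticipate is verifying that $q=p_0\cup p_1$ is a $\Gamma$-coloring, i.e.\ that no $\Gamma$-edge $y_0\mathrel\Gamma y_1$ with $y_0\in\dom(p_0)$, $y_1\in\dom(p_1)$ is monochromatic. The argument should run parallel to the ``Definition~\ref{posetdefinition}(A)'' claim inside Proposition~\ref{compatibilityproposition}: if $\phi_m(y_0,y_1)=0$, then the smallest algebraic set over $\supp(p_0)$ containing $y_1$, which by Noetherianity coincides with the one computed over the shared model, is a subset of $\{y:\phi_m(y_0,y)=0\}$; since that algebraic set is uncountable (it is infinite, being hit by the saturated $p_0$ on a set where the relevant color is attained infinitely often, hence uncountable by Corollary~\ref{c1}), the saturation of $p^*$ guarantees $\phi_m(y_0,\cdot)$-avoidance is encoded in the color of $y_1$ via the $b(p^*,A)$ and the $O\cap\phi_m(A)=0$ clause of Definition~\ref{posetdefinition}(B), so $y_0$ and $y_1$ get distinct colors. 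One must do the symmetric case and the case where $A$ over $\supp(p_0)$ and $A'$ over $\supp(p_1)$ interact, again reducing everything to algebraic data over the common model where mutual Noetherianity applies. I expect the bookkeeping to be somewhat delicate but entirely analogous to the extension claims already proved, so no genuinely new idea is needed beyond the master-condition construction plus the Noetherian transfer.
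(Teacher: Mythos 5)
Your proposal follows essentially the same route as the paper: build, by a length-$\aone$ recursion under CH, a ``master'' total coloring $c\leq p$ whose support is all of $\R^V$ and which satisfies clause (B) of Definition~\ref{posetdefinition}, and then verify balance by running the compatibility criterion of Proposition~\ref{compatibilityproposition}, using mutual $(n+1)$-Noetherianity (applied to the Zariski topology on $\R^n$, of Krull dimension $n$) to identify $B(p_{1-i},x)$ with $B(c,x)$ for points and algebraic sets coming from the other extension. That is exactly the paper's argument.

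One concrete correction: $Q$ cannot be the trivial forcing. The master condition has support equal to $\R^V$, which is uncountable, so it violates clause (A) of Definition~\ref{posetdefinition} and is not an element of $P$ in the ground model; the pair must be taken as $\langle\coll(\gw,\R),\check c\rangle$ (as the paper does), so that $c$ becomes a genuine condition only after $\R^V$ is collapsed to be countable. Your parenthetical fallback to a CH-sized forcing points in the right direction, but note that $\coll(\gw,\R)$ need not be $\gs$-closed and need not be --- only the Suslin poset $P$ itself requires $\gs$-closure for the preservation theorem.
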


\noindent Unlike the usual theorems in \cite{z:geometric}, I do not bother to produce an exact classification of balanced virtual conditions, which in this case is complicated and brings nothing new to the proofs. I do not know if the CH assumption can be dropped.

\begin{proof}
Let $p\in P_n$ be a condition. Let $\langle x_\ga\colon\ga\in\gw_1\rangle$ be an enumeration of $\mathbb{R}$. By recursion on $\ga\in\gw_1$ build conditions $p_\ga\in P_n$ so that
$p=p_0$, $p_{\ga+1}\leq p_\ga$ is a condition such that $x_\ga\in\supp(p_{\ga+1})$, and $p_\ga=\bigcup_{\gb\in\ga}p_\gb$ for limit ordinals $\ga$. The function $c=\bigcup_\ga p_\ga$ is a total $\Gamma$-coloring which satisfies Definition~\ref{posetdefinition}(B). It is clear that $\coll(\gw, \mathbb{R})\Vdash\check c\leq p$ is a condition in the poset $P$. By Proposition~\ref{compatibilityproposition}, it will be enough to show that the pair $\langle\coll(\gw, \mathbb{R}), \check c\rangle$ is $n+1$-Noetherian balanced.

To do this, suppose that $V[G_0], V[G_1]$ are mutually $n+1$-Noetherian extensions. Suppose that $p_0, p_1\in P_n$ are conditions in the respective models stronger than $c$; I must show that $p_0, p_1$ are compatible. This will be done using Proposition~\ref{compatibilityproposition}.

To show that $p_0\cup p_1$ is a $\Gamma$-coloring, suppose that $x_0\in\dom(p_0)$ and $x_1\in\dom(p_1)$ are $\Gamma$-related points. Let $m\in\gw$ be a number such that $\phi_m( x_0, x_1)=0$. If both points $x_0, x_1$ belong to one and the same condition, then they must receive distinct colors as that condition is a $\Gamma$ coloring. The only remaining configuration is that $x_0\in V[G_0]\setminus V$ and $x_1\in V[G_1]\setminus V$. Note that $B(c, x_1)=B(p_0, x_1)\subset\{y\in X\colon \phi_m(y_0, x)=0\}$ holds by the Noetherian assumption on the models $V[G_0]$ and $V[G_1]$. Since $p_1\leq c$ holds, there must be a point $x'_1\in\dom(c)\cap B(c, x_1)$ distinct from $x_0$ such that $p_1(x_1)=c(x'_1)$. By the irreflexivity assumption on the polynomial $p_m$, it must be the case that $x_0\neq x'_1$. Since $p_1$ is a $\Gamma$ coloring, it follows that $p_0(x_0)\neq c(x'_1)=p_1(x_1)$ holds as desired.

Finally, let $A\subset X$ be a set algebraic over $\supp(p_0)$ and $x\in A\cap\dom(p_1)$ be a point; I must find a point $x'\in A\cap\dom(p_0)$ which receives the same color as $x$. Use the  Noetherian assumption to see that $B(p_0, x)=B(c, x)\subseteq A$. Use the $p_1\leq c$ assumption to find a point $x'\in B(c, x)$ such that $c(x')=p_1(x)$. The point $x'$ works as desired.
\end{proof}

Finally, the stage has been set to prove Theorem~\ref{maintheorem2}. Let $n\geq 1$ be a number. Let $\Gamma$ be a graph on $\mathbb{R}^n$ without a perfect clique. Let $\kappa$ be an inaccessible cardinal and let $W$ be the choiceless Solovay model derived from $\kappa$; $W$ is a model of ZF+DC \cite[Theorem 26.14]{jech:newset}. Let $P$ be the coloring poset isolated in Theorem~\ref{posettheorem}. Let $G\subset P$ be a filter generic over $W$. The model $W[G]$ is a $\gs$-closed extension of $W$ by Proposition~\ref{sigmaproposition} and as such it is a model of ZF+DC. In addition, $\bigcup G$ is a total coloring of the graph $\Gamma$ with countably many colors by Corollary~\ref{coloringcorollary}. By Corollary~\ref{balancecorollary} and Theorem~\ref{preservationtheorem}, in the model $W[G]$ every nonmeager subset of $\mathbb{R}^{n+1}$ contains point of arbitrary small enough distance. By the Baire category theorem, it must be the case that the chromatic number of $\Gamma_{n+1}$ is uncountable in $W[G]$ as required. Thus, the model $W[G]$ witnesses the conclusion of Theorem~\ref{maintheorem}; in case that $\Gamma=\Gamma_n$, it witnesses the conclusion of Theorem~\ref{maintheorem}.

\section{Declarations}

\subsection{Funding}

No funding was received to assist with the preparation of this manuscript.

\subsection{Employment}

The author is employed by University of Florida. He anticipates employment by the Czech Academy of Sciences.

\subsection{Conflicts of interest/Competing interests}

The author has no relevant financial or non-financial interests to disclose.

\subsection{Data availability}

This paper has no associated datasets.

\bibliographystyle{plain} 
\bibliography{odkazy,zapletal,shelah}

\end{document}